\documentclass[11pt]{article}
\usepackage{amssymb,amsmath,amsthm,amsfonts,color,booktabs,subfigure,float}
\usepackage[dvips]{graphicx}
\textwidth=16.5cm \textheight=24cm

\oddsidemargin -0.3cm \headsep=-1.5cm \raggedbottom \evensidemargin
-0.3cm

\DeclareMathOperator{\rank}{rank}
\DeclareMathOperator{\diag}{diag}

\DeclareMathOperator{\argmin}{argmin}

\setlength{\arrayrulewidth}{0.5pt}

\theoremstyle{plain}
\newtheorem{theorem}{Theorem}[section]
\newtheorem{lemma}{Lemma}[section]

\numberwithin{equation}{section}

\theoremstyle{definition}

\newtheorem{remark}{Remark}[section]

\def\dref#1{(\ref{#1})}
\def\dfrac{\displaystyle\frac}
\def\crr{\cr\noalign{\vskip2mm}}
\def\disp{\displaystyle}
\def\H{{\cal H}}
\def\A{{\cal A}}

\newcommand{\N}{{\mathbb N}}
\newcommand{\K}{{\mathbb K}}

\begin{document}

\title{{\bf Simultaneous Identification of  Coefficient and Initial State
for One-Dimensional Heat Equation from Boundary Control and
Measurement}}
\author{Zhi-Xue Zhao$^{a,b}$ M.K. Banda$^b$,  and  Bao-Zhu Guo$^{c,d}$\footnote{
Corresponding author. Email: bzguo@iss.ac.cn}\\
{\it $^a$School of Mathematical Sciences, }\\{\it Tianjin Normal University, Tianjin 300387, China}\\
{\it $^b$Department of Mathematics and Applied Mathematics,}\\{\it
University of Pretoria,
 Pretoria 0002, South Africa}\\
{\it $^c$Academy of Mathematics and Systems Science, }\\{\it Academia Sinica, Beijing 100190, China, }\\
{\it $^d$School of Computer Science and Applied Mathematics, }\\{\it
University of the Witwatersrand, Johannesburg, South Africa} }
\date{}
\maketitle
\begin{center}
\begin{abstract}

In this paper, we consider simultaneous reconstruction of the
diffusion coefficient and initial state for a one-dimensional heat
equation through boundary control and measurement. The boundary
measurement is known to make the system exactly observable, and both
coefficient and initial state are shown to be identifiable by this
measurement. By a Dirichlet series representation for observation,
we can transform the problem into an inverse process of
reconstruction of the spectrum and coefficients for Dirichlet series
in terms of observation. This happens to be the reconstruction of
spectral data for an exponential sequence with measurement error.
This enables us to develop an algorithm based on the matrix pencil
method in signal analysis. An error analysis is made for the
proposed method. The numerical simulations are presented to verify
the proposed algorithm.

\vspace{0.3cm}

{\bf Keywords:}~ Identification; identifiability; heat equation;
matrix pencil method; error analysis.

\vspace{0.3cm}

{\bf AMS subject classifications:}~ 35K05,  35R30, 65M32, 65N21,
15A22.

\end{abstract}

\end{center}

\section{Introduction}
\setcounter{equation}{0}

It is recognized that many industrial controls are temperature
control. The inverse heat conduction problem (IHCP) is one of the
important control problems in science and engineering. Such kinds of
problems usually arise in modeling and process control with heat
propagation in thermophysics, chemical engineering, and many other
industrial and engineering applications. In the last decades, there
are various class  of IHCPs having been investigated ranging from
recovery of boundary heat flux \cite{Wang1}; estimation of medium
parameters such as thermal conductivity coefficient
\cite{Gutman,Lorenzi1} and radiative coefficient
\cite{Choulli1,Deng1,Yamamoto1}; recovery of spatial distribution of
heat sources \cite{Ma,Zheng1}; and reconstruction of initial state
distributions \cite{Yamamoto1}. For many other aspects including
numerical solutions of  inverse problems for PDEs, we refer to
monographs \cite{Isakov1} and \cite{Kirsch1}.

Most of the existing works, however, are devoted to single parameter
identification. The simultaneous reconstruction of more than one
different coefficients within a dynamical framework has not been
sufficiently investigated, for which, to the best of our knowledge,
only a few studies are available. In \cite{Benabdallah1}, the
uniqueness and stability of determining both diffusion coefficient
and initial condition from a measurement of the solution at a
positive time and on an arbitrary part of the boundary for a heat
equation are discussed. In \cite{Choulli1} uniqueness and stability
estimate of an inverse problem for a parabolic equation, where
simultaneously  determination of heat radiative coefficient, the
initial temperature, and a boundary coefficient from a temperature
distribution measured at a positive moment is considered. In
\cite{Wang1}, a numerical method is presented to determine both
initial value and  boundary value at one end from the discrete
observation data at the other end. Given a measurement of
temperature at a single instant of time and measurement of
temperature in a subregion of the physical domain, \cite{Yamamoto1}
investigates stability and numerical reconstruction of initial
temperature and radiative coefficient for a heat conductive system.

In these  works aforementioned, most of the results about uniqueness
and stability are based on the Carleman estimates for which
\cite{Yamamoto2} presents a brief review on the application of the
Carleman estimates to inverse problems for parabolic equations. To
cope with the   ill-posed nature of inverse problems, optimization
methods and regularization techniques together with many other
numerical methods such as finite difference method,  finite element
method, and   boundary element method are generally applied in
literature.

In addition  to the numerical methods used in literature cited
above, the inverse spectral theory is also considered as an
important tool in the study of inverse problems
\cite{Levitan,Poschel}. Solutions of inverse spectral problems
generate certain geometric and physical parameters from the spectral
data, like shape of the region, coefficients of conductivity, etc.
 A number of classical identifiability results are based
on the inverse spectral theory, see, for instance,
\cite{Murayama,Pierce,Suzuki1,Suzuki2}. In \cite{Pierce}, the unique
determination of eigenvalues and coefficients under certain
conditions for a  parabolic equation is considered by
Gel'fand-Levitan theory. Some uniqueness results on the simultaneous
identification of coefficients and initial values for parabolic
equations are given in \cite{Murayama,Suzuki1,Suzuki2}. However,
most of the identifiability results require that the initial value
can not be orthogonal to any of the eigenvectors. This restrictive
condition is actually unverifiable in practice since the initial
value is also unknown. Some other uniqueness results on the
determination of constant coefficients are discussed in
\cite{Kitamura,Nakagiri}. But no numerical identification algorithm
is attempted in these theoretical papers.

In this paper, we are concerned with reconstruction of the diffusion
coefficient and initial state for a one-dimensional heat conduction
equation in a homogeneous bar of unit length, which is described by
\begin{equation}\label{1.1}
\left\{\begin{array}{ll} u_{t}(x,t)=\alpha u_{xx}(x,t),& 0<x<1,\;
t>0,\crr\disp \alpha u_x(0,t)=f(t),\;\; u_x(1,t)=0,& t\geq
0,\crr\disp y(t)=u(0,t),& t\ge 0,\crr\disp  u(x,0)=u_0(x),& 0\leq
x\leq 1,
\end{array}\right.
\end{equation}
where $x$ represents  the position, $t$ the time.
$\alpha\geq\alpha_0>0$ is an unknown constant that represents the
thermal diffusivity, $u_0(x)$ is the unknown initial temperature
distribution, both of them need to be identified. Here we do not
impose any restriction on the initial value other than boundedness.
The function $f(t)$ is the Neumann boundary control (input) which
represents the heat flux through the left end of the bar, and $y(t)$
is the boundary temperature measurement. Sometimes we write the
solution of \dref{1.1} as $u=u(x,t;f,u_0)$ to denote its dependence
on $f(t)$ and $u_0(x)$.

Let $\H=L^2(0,1)$ with the usual inner product $\langle\cdot,
\cdot\rangle$ and inner product induced norm $\|\cdot\|$. Define the
operator $\A:\; D(\A)(\subset \H)\mapsto \H$ by
\begin{equation}\label{2.1}
\left\{
\begin{array}{l}
[\A \psi](x)=-\alpha \psi^{\prime\prime}(x),\crr\disp
D(\A)=\{\psi\in H^2(0,1)\,|\,\psi^\prime(0)=\psi^\prime(1)=0\}.
\end{array}
\right.
\end{equation}
It is well known that this defined operator $\A$ is positive
semidefinite in $\H$. The eigenvalues $\{\lambda_n\}$ are given by
\begin{equation}\label{2.2}
\lambda_n=\alpha n^2\pi^2,\; n=0,1,2,\dots,
\end{equation}
and the corresponding eigenfunctions $\{\phi_n(x)\}_{n=0}^\infty$
are given by
\begin{equation}\label{2.3}
\phi_0(x)=1,\; \phi_n(x)=\sqrt{2}\cos n\pi x,\;  n=1,2,\dots.
\end{equation}
Denote
$$\N_0=\N \cup \{0\}.
$$
It is well known that $\{\phi_n(x)\}_{n\in \N_0}$ forms an
orthonormal basis for $\H$.

Set
\begin{equation}\label{2.7}
\begin{array}{l}
\disp G(t,x,y)=\sum_{n=1}^\infty e^{-\lambda_n
t}\phi_n(x)\phi_n(y)+1,\crr \disp A_0(x)=\int_0^1 u_0(x)dx,\crr
\disp A_n(x)=\langle u_0,\phi_n\rangle
\phi_n(x)=2\left(\disp\int_0^1u_0(y)\cos n\pi y \; dy\right)\cos
n\pi x,\; n\in \N.
\end{array}
\end{equation}
A standard analysis (\cite{Chang}) shows that the solution of system
\dref{1.1} can be represented by
\begin{equation}\label{2.10}
u(x,t;f,u_0)=\disp\sum_{n=0}^\infty A_n(x)e^{-\lambda_n
t}-\disp\int_0^t G(t-s,x,0)f(s) \; ds, \; 0\leq x\leq 1,\;t>0.
\end{equation}
Therefore, the boundary observation $y(t)$ takes the form
\begin{equation}\label{2.11}
y(t)=u(0,t;f,u_0)=\disp\sum_{n=0}^\infty A_n(0)e^{-\lambda_n
t}-\disp\int_0^t G(t-s,0,0)f(s) \; ds,\forall\;  t>0.
\end{equation}

The inverse problem that we consider in this paper can be described
as follows:

\vspace{0.15cm}

\noindent{\bf Inverse Problem:} {\it Given $f(t)$ and $y(t)$  in a
finite time interval $t\in [0,T]$, determine $\alpha$ and $u_0(x)$
simultaneously.}

\vspace{0.15cm}

Let us briefly explain the main idea of this paper, which is
inspired by an  idea of  \cite{Guo1}. By \dref{2.11}, the output
$y(t)$ of system \dref{1.1} is separated into  two  parts
$u(0,t;0,u_0)$ and $u(0,t;f,0)$, where the former is determined by
the initial state only and the latter by the control. The first part
$u(0,t;0,u_0)$ admits a Dirichlet series representation, which means
that it can be determined by its restriction on any finite interval.
By choosing the control $f(t)$ appropriately, we can design an
algorithm to estimate the unknown coefficients in the Dirichlet
series. The part of the output that has been determined by the
initial value, $u(0,t;0,u_0)$, can be substituted in equation
\dref{2.11} of the output such that the coefficient identification
of $\alpha$ is equivalently transformed into the case with zero
initial state (see section 2 and 3.3 for details). After estimating
the coefficient $\alpha$, the remaining problem is a single
reconstruction of the initial state.

The rest of the paper is organized as follows. Section 2 is devoted
to simultaneous identifiability of the coefficient and initial value
based on the Dirichlet series theory. The identification algorithm
based on the matrix pencil method is introduced in section 3. In
section 4, the error analysis of the matrix pencil method to the
infinite spectral estimation problem is obtained. A numerical
simulation is presented in section 5 to show the validity of the
algorithm introduced in section 3.

\section{Identifiability}

Since we want to reconstruct simultaneously the diffusion
coefficient $\alpha$ and the initial state $u_0(x)$ of system
\dref{1.1} from the boundary control $f(t)$ and observation
$y(t)=u(0,t)$,  we need first to make sure that the data
$\{f(t),u(0,t)\}$ is sufficient to determine $\alpha$ and $u_0(x)$
uniquely. This is the identifiability from system control point of
view.

Suppose that $T_2>T_1>0$ are two arbitrary positive numbers,  and
the boundary control function $f(t)$ is chosen to be zero during the
time interval $[0,T_2]$. In this case, it is deduced from
\dref{2.11} that the boundary observation is
\begin{equation}\label{3.1}
y(t)\triangleq u(0,t;0,u_0)=\sum_{n=0}^\infty C_ne^{-\lambda_n t},
\; \forall\; t\in [T_1,T_2],
\end{equation}
where
\begin{equation}\label{3.2}
\begin{array}{l}
C_0=A_0(0)=\disp\int_0^1 u_0(x)dx,\crr
C_n=A_n(0)=2\left(\disp\int_0^1u_0(y)\cos n\pi ydy\right),\; n\in
\N.
\end{array}
\end{equation}

\begin{theorem}\label{Th3.1}
Let $0\leq T_1<T_2<\infty$, $u_0 \in L^2(0,1)$ and let $\lambda_n$
and $C_n$ be defined as in \dref{2.2} and \dref{3.2}, respectively.
Then the set $\{(C_k,\lambda_k)\;|\; C_k\neq 0\}_{k\in\N_0}$ in
\dref{3.1} can be uniquely determined by the observation data
$\{y(t)|\; t\in[T_1,T_2]\}$.
\end{theorem}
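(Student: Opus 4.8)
The plan is to prove the statement in two stages: first show that the data on $[T_1,T_2]$ already determine $y$ on the whole positive axis by analyticity, and then show that a Dirichlet series of the form \dref{3.1} is uniquely determined by its values on $(0,\infty)$ through an asymptotic ``peeling'' of its terms.

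For the first stage, I would begin by noting that since $u_0\in L^2(0,1)$, Parseval's identity gives $\sum_{n\in\N_0}C_n^2<\infty$; indeed the $C_n$ in \dref{3.2} are, up to bounded factors, the Fourier cosine coefficients of $u_0$. Because the exponents $\lambda_n=\alpha n^2\pi^2$ tend to $+\infty$, the series $\sum_{n}C_n e^{-\lambda_n t}$ converges absolutely and locally uniformly on the half-plane $\Re t>0$ and therefore defines a function holomorphic there, in particular real-analytic on $(0,\infty)$. Since $T_2>0$, the set $[T_1,T_2]\cap(0,\infty)$ is a nondegenerate interval with accumulation points, so by the identity theorem for analytic functions the restriction $\{y(t)\mid t\in[T_1,T_2]\}$ determines $y$ uniquely on all of $(0,\infty)$.

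For the second stage it remains to recover the nonzero pairs from $y|_{(0,\infty)}$. Using the strict monotonicity of $\lambda_n$ in $n$, I would relist the pairs with $C_k\neq0$ as a strictly increasing exponent sequence $\mu_0<\mu_1<\mu_2<\cdots$ with nonzero coefficients $D_0,D_1,\dots$, so that $y(t)=\sum_{j\geq0}D_j e^{-\mu_j t}$. The smallest exponent dominates as $t\to\infty$: one has $\mu_0=-\lim_{t\to\infty}t^{-1}\log|y(t)|$ and then $D_0=\lim_{t\to\infty}e^{\mu_0 t}y(t)$. Subtracting $D_0 e^{-\mu_0 t}$ leaves $\sum_{j\geq1}D_j e^{-\mu_j t}$, and repeating the procedure recovers $(\mu_1,D_1)$, $(\mu_2,D_2)$, and so on inductively. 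Since every nonzero pair is thereby reconstructed from $y|_{(0,\infty)}$, which is itself determined by $y|_{[T_1,T_2]}$, the claimed uniqueness follows.

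The main obstacle I anticipate is making the asymptotic extraction rigorous: I must show the tail $\sum_{j\geq1}D_j e^{-\mu_j t}$ is genuinely of lower order than the leading term as $t\to\infty$, uniformly enough to justify the two limits defining $\mu_0$ and $D_0$ and to handle coefficients of either sign. Here the super-linear growth $\lambda_n\sim\alpha\pi^2 n^2$ together with $\{C_n\}\in\ell^2$ gives a crude bound of the form $\bigl|\sum_{j\geq1}D_j e^{-\mu_j t}\bigr|\leq Ce^{-\mu_1 t}$ for large $t$, which guarantees that $y(t)\neq0$ eventually and reduces the estimates to routine tail bounds for convergent series.
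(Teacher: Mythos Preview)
Your two-stage outline is correct, and the analyticity reduction in your first stage coincides with the paper's Step~2. The genuine difference is in how you recover the pairs from $y|_{(0,\infty)}$. The paper takes the Laplace transform of \dref{3.1} to obtain $\hat y(s)=\sum_{k\in\K}C_k/(s+\lambda_k)$ and reads off $\{\lambda_k\}$ as the poles and $\{C_k\}$ as the residues, invoking uniqueness of the Laplace transform; this is a one-line identification once the transform is in hand. Your asymptotic peeling as $t\to\infty$ is a more elementary and fully constructive alternative that avoids any transform theory, at the cost of the inductive tail estimate you already flagged. Both methods exploit the same structural fact---strict separation of the exponents $\lambda_n$---and your anticipated bound $\bigl|\sum_{j\geq1}D_je^{-\mu_j t}\bigr|\leq Ce^{-\mu_1 t}$ follows immediately from Cauchy--Schwarz and $\{C_n\}\in\ell^2$, so the peeling can be made rigorous without difficulty. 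One small point worth recording explicitly: your limit for $\mu_0$ presupposes $y\not\equiv0$ (equivalently $\K\neq\emptyset$); in the trivial case $y\equiv0$ the set to be determined is empty and there is nothing to prove.
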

\begin{proof} Since
$u_0(x)$ is unknown, it is not clear whether $C_n\neq 0$ for any
$n\in \N_0$. Define the set $\K\subset \N_0$, which is unknown as
well and satisfies
\begin{equation}\label{3.3}
\left\{\begin{array}{ll} C_k\neq 0,&k\in \K,\\
C_k=0,&k\notin \K.
\end{array}
\right.
\end{equation}
Then \dref{3.1} can be re-written as
\begin{equation}\label{3.4}
y(t)=\sum_{k\in\K} C_ke^{-\lambda_k t},\; \forall\;  t\in [T_1,T_2].
\end{equation}
The proof is accomplished by  two steps.

{\it  Step 1:  $\{(C_k,\lambda_k)\}_{k\in\K}$ can be uniquely
determined by infinite-time observation $\{y(t)|\; t\in
(0,\infty)\}$.}

Actually, since
$$
\sum_{n=0}^\infty |C_n|^2\leq 2\|u_0\|_{L^2(0,1)}^2<+\infty,
$$
it follows that  $\sup_{n\ge 0}|C_n|<\infty$. Since
$\lambda_n=\alpha n^2\pi^2$, the series \dref{3.1} converges
uniformly in $t$ over $(0,+\infty)$. Apply the Laplace transform to
\dref{3.1} to obtain
\begin{equation}\label{3.5}
\hat{y}(s)=\sum_{n=0}^\infty \dfrac{C_n}{s+\lambda_n}=\sum_{k\in\K}
\dfrac{C_k}{s+\lambda_k},
\end{equation}
where $\hat{}$ denotes the Laplace transform. It can be seen  from
\dref{3.5} that $-\lambda_k$ is a  pole of $\hat{y}(s)$ and $C_k$ is
the residue of $\hat{y}(s)$ at $-\lambda_k$ for $k\in\K$. By the
uniqueness of the Laplace transform, $\{(C_k,\lambda_k)\}_{k\in\K}$
is uniquely determined by $\{y(t)|\; t\in(0,\infty)\}$.

 {\it  Step 2: $\{(C_k, \lambda_k)\}_{k\in\K}$ can be uniquely
determined by finite-time observation $\{y(t)|\;  t\in
[T_1,T_2]\}$.}

By {\it step 1},  we only need to show that the observation $y(t)$
in \dref{3.1} for all $t>0$ can be uniquely determined by its
restriction on $I_1=[T_1,T_2]$, or in other words, $y(t)=0$ for
$t\in [T_1,T_2]$ in \dref{3.1} implies that $y(t)=0$ for all $t>0$.
But this is  obvious because $y(t)$ is an analytic function in
$t>0$. This completes the proof of the theorem.
\end{proof}

\begin{theorem}\label{Th3.2} Let $0\leq T_1<T_2<T_3\leq \infty$,
$u_0 \in L^2(0,1)$ and let $\lambda_n$ and $C_n$ be defined as in
\dref{2.2} and \dref{3.2}, respectively. The control function $f(t)$
satisfies
\begin{equation}\label{3.8}
\left\{
\begin{array}{ll}
f(t)=0,\;&\mbox{for\;\;} t\in[0,T_2)\\
f(t)\neq 0,\;&\mbox{for almost all\;\;} t\in [T_2,T_3]
\end{array}
\right.
\end{equation}
and the corresponding observation data is
$\{y(t)=u(0,t;f,u_0)|\;t\in [T_1,T_3]\}$. Then the diffusion
coefficient $\alpha$ and the initial state $u_0(x)$ in system
\dref{1.1} can be uniquely determined by the observation
$\{y(t)|\;t\in [T_1,T_3]\}$.
\end{theorem}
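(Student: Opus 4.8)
The plan is to establish identifiability as an injectivity statement: suppose two pairs $(\alpha,u_0)$ and $(\tilde\alpha,\tilde u_0)$ produce the same observation $y(t)$ on $[T_1,T_3]$ under one control $f$ satisfying \dref{3.8}, and deduce $\alpha=\tilde\alpha$ and $u_0=\tilde u_0$. I write $\lambda_n,C_n,G$ and $\tilde\lambda_n,\tilde C_n,\tilde G$ for the spectral data \dref{2.2}, \dref{3.2} and the kernel \dref{2.7} associated with $\alpha$ and $\tilde\alpha$, respectively. On the subinterval $[T_1,T_2]$ the control vanishes, so by \dref{3.1} both observations reduce to the Dirichlet series $\sum_n C_ne^{-\lambda_n t}$ and $\sum_n\tilde C_ne^{-\tilde\lambda_n t}$; since these agree on $[T_1,T_2]$ and each is analytic on $(0,\infty)$ (exactly as in the proof of Theorem \ref{Th3.1}), they coincide as functions for every $t>0$, even though the individual exponents and coefficients are not yet known to match.

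The heart of the argument is to extract $\alpha$ from the part of the output generated by the control. Subtracting the now common Dirichlet part from \dref{2.11} on $[T_2,T_3]$ and using $f\equiv0$ on $[0,T_2)$ gives the homogeneous Volterra identity
\[
\int_{T_2}^t\bigl[G(t-s,0,0)-\tilde G(t-s,0,0)\bigr]f(s)\,ds=0,\qquad t\in[T_2,T_3].
\]
Setting $H(\tau)=G(\tau,0,0)-\tilde G(\tau,0,0)=2\sum_{n=1}^\infty\bigl(e^{-\alpha n^2\pi^2\tau}-e^{-\tilde\alpha n^2\pi^2\tau}\bigr)$, where the constant terms in \dref{2.7} cancel, this is a convolution in the shifted variable $\tau=t-T_2$ that vanishes on $[0,T_3-T_2]$. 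I then invoke the Titchmarsh convolution theorem: because $f\neq0$ for almost every $t\in[T_2,T_3]$, it cannot vanish on any initial interval of positive measure, which forces $H\equiv0$ a.e. on $(0,T_3-T_2)$. As $H$ is real-analytic on $(0,\infty)$, it vanishes identically there, and matching the poles of its Laplace transform forces the exponent sets $\{\alpha n^2\pi^2\}$ and $\{\tilde\alpha n^2\pi^2\}$ to coincide; comparing their smallest elements yields $\alpha=\tilde\alpha$. I expect this deconvolution step, passing from a vanishing finite-interval convolution to $H\equiv0$, to be the main obstacle, and the genericity hypothesis $f\neq0$ a.e. in \dref{3.8} is precisely what renders Titchmarsh applicable.

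Once $\alpha=\tilde\alpha$ is established the two exponent sequences agree, $\lambda_n=\tilde\lambda_n=\alpha n^2\pi^2$, and the identity $\sum_n C_ne^{-\lambda_n t}=\sum_n\tilde C_ne^{-\lambda_n t}$ together with the linear independence of $\{e^{-\lambda_n t}\}_{n\in\N_0}$ gives $C_n=\tilde C_n$ for every $n$. By \dref{3.2} the $C_n$ are exactly the Fourier cosine coefficients of $u_0$ relative to the basis \dref{2.3}, so $u_0(x)=C_0+\sum_{n\geq1}C_n\cos n\pi x=\tilde u_0(x)$ in $L^2(0,1)$, which completes the proof. Equivalently, and in the constructive spirit of the introduction, Theorem \ref{Th3.1} first recovers the pairs $\{(C_k,\lambda_k)\}_{k\in\K}$ from $[T_1,T_2]$, knowledge of $\alpha$ then assigns to each observed $\lambda_k$ its index $n=\sqrt{\lambda_k/(\alpha\pi^2)}\in\N_0$, and $u_0$ is reassembled from its recovered cosine coefficients.
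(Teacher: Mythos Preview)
Your proof is correct and follows essentially the same route as the paper: use analyticity on $[T_1,T_2]$ to identify the free-evolution Dirichlet series, then apply the Titchmarsh convolution theorem on $[T_2,T_3]$ to force $G(\cdot,0,0)=\tilde G(\cdot,0,0)$ and hence $\alpha=\tilde\alpha$, and finally recover $u_0$ from its cosine coefficients. The only difference is cosmetic---you frame the argument as injectivity of the forward map while the paper phrases it constructively, first invoking Theorem~\ref{Th3.1} to determine $\{(C_k,\lambda_k)\}_{k\in\K}$ and then solving the Volterra equation \dref{3.14} for $G(t,0,0)$---but the analytic content is identical.
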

\begin{proof}
By \dref{2.11}, for $t\in[T_2,T_3]$,
\begin{equation}\label{3.9}
y(t)=\sum_{k\in\K} C_ke^{-\lambda_k t}-\int_{T_2}^t G(t-s,0,0)f(s)
\; ds.
\end{equation}
Set
\begin{equation}\label{3.12}
\widetilde{y}(t)=\sum_{k\in\K} C_ke^{-\lambda_k (t+T_2)}-y(t+T_2).\;
t\in[0,T_3-T_2],
\end{equation}
Then \dref{3.9} takes the following form
\begin{equation}\label{3.14}
\widetilde{y}(t)=\int_0^t G(t-s,0,0)f(s+T_2) \; ds,\; t\in
[0,T_3-T_2].
\end{equation}
Since $f(t)\neq 0$ for almost all $t\in [T_2,T_3]$, the integral
equation \dref{3.14} has a unique solution $G(t,0,0)$ \cite[Theorem
151, p.324]{Titchmarsh}, which means that
\begin{equation}\label{3.14.2}
G(t,0,0)=2\sum_{n=1}^\infty e^{-\lambda_n t}+1, \; t\in [0,T_3-T_2]
\end{equation}
can be uniquely determined by $\widetilde{y}(t),\;t\in [0,T_3-T_2]$.
By Theorem \ref{Th3.1}, $\{(C_k,\lambda_k)\}_{k\in \K}$ can be
determined from the observation $\{y(t)|\; t\in[T_1,T_2]\}$, which
shows, from \dref{3.12}, that $\widetilde{y}(t),\;t\in [0,T_3-T_2]$
can be obtained from   $\{y(t)|\; t\in [T_1,T_3]\}$.

Since all the coefficients of the exponents in \dref{3.14.2} are
nonzero, by Theorem \ref{Th3.1} again, $\{\lambda_n\}_{n\in\N}$ can
be uniquely determined by $\{G(t,0,0)|\; t\in [0,T_3-T_2]\}$. Hence,
the exponents $\{\lambda_n\}_{n\in\N}$ are uniquely determined by
 $\{y(t)|\; t\in [T_1,T_3]\}$, and then the
diffusion coefficient $\alpha$ can be obtained from
$\lambda_n=\alpha n^2\pi^2$. This proves the identifiability of
$\alpha$.

Given $\alpha$ is known,  and since $\lambda_i\neq \lambda_j$ for
$i\neq j$, we can also determine the set $\K$ by comparing
$\{\lambda_k\}_{k\in\K}$ with $\{\lambda_n=\alpha
n^2\pi^2\}_{n\in\N_0}$, where $\K$ is defined through \dref{3.3}.
The initial value $u_0(x)$  is therefore uniquely determined by
\begin{equation}\label{3.16}
u_0(x)=\sum_{n=0}^\infty \langle u_0,\phi_n\rangle
\phi_n(x)=\sum_{k\in\K} C_k\cos k\pi x.
\end{equation}
This completes the proof of the theorem.
\end{proof}

\begin{remark}\label{Re2.1}
There are many  papers studying the simultaneous identifiability of
parameters and initial values for  parabolic equations, see, for
instance, \cite{Murayama,Suzuki1,Suzuki2}. However, most of the
identifiability results require that the initial value should be a
generating element (see \cite{Suzuki1}) with respect to the system
operator $\A$, that is,
\begin{equation}\label{3.14.4}
\langle u_0,\phi_n\rangle\neq 0,\; \mbox{for any\;} n\in\N_0.
\end{equation}
But this condition is unverifiable because  the initial value
$u_0(x)$ is also unknown. In Theorem \ref{Th3.2}, this restrictive
condition on the initial value is removed by designing the control
signal properly. The simplest practically implementable control that
satisfies \dref{3.8} is
\begin{equation}\label{3.14.3}
f(t)=\left\{
\begin{array}{ll}
0,& t\in [0,T_2),\\
1,& t\in [T_2,T_3].
\end{array}
\right.
\end{equation}
which is used in the numerical identification algorithm in section
3.
\end{remark}

\begin{remark}\label{Re2.2}
It is known that persistently exciting (PE) condition plays a
crucial role in adaptive parameter identification to ensure the
convergence, see e.g., \cite{Orlov,Smyshlyaev}. It seems that the
control signal \dref{3.8} in Theorem \ref{Th3.2} is similar to that
in \cite{Smyshlyaev}, where the nonzero constant input is proved to
satisfy the PE condition. Although the method in \cite{Smyshlyaev}
is online identification (for different coefficients) whereas here
it is offline, the condition \dref{3.8} is also to excite
persistently the plant behavior. To illustrate the identifiability
analysis more clearly, we give a block diagram in Figure
\ref{Fig-1.1}.
\begin{figure}[h!]
\centering
\includegraphics[width=2.6in]{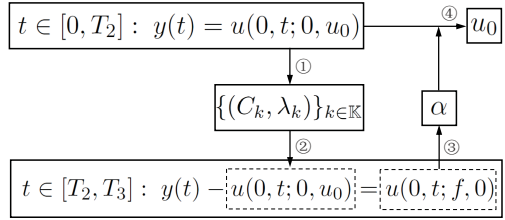}
\caption{Block diagram of identifiability analysis}\label{Fig-1.1}
\end{figure}
\end{remark}

\section{Numerical computation method}

It is seen clearly from previous section that the  key point for
identification purposes is to recover the spectrum-coefficient data
$\left\{(C_n,\lambda_n)\right\}_{n\in\N_0}$ from $\{y(t)|\; t\in
[T_1,T_2]\}$ by the Dirichlet series representation:
\begin{equation}\label{4.01}
y(t)=\sum_{n=0}^\infty C_n e^{-\lambda_n t},\;  t\in [T_1,T_2].
\end{equation}
The difficulty is that  there may exist infinitely many $C_n\neq 0$
in \dref{4.01}. In this section, we use the matrix pencil method to
extract some of the $\{(C_n,\lambda_n)\}$ from the sum of the first
$M$ terms of the infinite series \dref{4.01}, and treat the
remainder terms just as a measurement error.

\subsection{Finite dimensional approximation of spectral estimation}

Suppose $M\in\N$ and split the series in \dref{4.01} into two parts:
\begin{equation}\label{4.02}
y(t)=\sum_{n=0}^{M-1} C_n e^{-\lambda_n t}+\sum_{n=M}^\infty C_n
e^{-\lambda_n t},\; \forall\; t\in  [T_1,T_2].
\end{equation}
Denote the second series in \dref{4.02} as
\begin{equation}\label{4.03}
e(M,t)=\sum_{n=M}^\infty C_n e^{-\lambda_n t},\; \forall\; t\in
[T_1,T_2].
\end{equation}
 Theorem \ref{Th4.1} gives  a  bound of $e(M,t)$.

\begin{theorem}\label{Th4.1}
Suppose that the coefficient $\alpha\geq \alpha_0>0$ and the initial
value  $u_0(x)$  satisfies
\begin{equation}\label{4.041}
\left\|u_0\right\|_{L^2(0,1)}\leq M_0
\end{equation}
for some $M_0>0$. Then for $t\in [T_1,T_2]$,
\begin{equation}\label{4.042}
|e(M,t)|<\left(\sqrt{2}+\dfrac{1}{4M\pi^2\alpha_0
T_1}\right)M_0e^{-\alpha_0 M^2\pi^2 T_1}.
\end{equation}
\end{theorem}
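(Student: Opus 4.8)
The plan is to bound the tail $e(M,t)$ termwise, combining an $\ell^2$ control on the coefficients $C_n$ (from Parseval) with a Gaussian-type estimate on the surviving exponentials. First I would record the two structural facts that drive everything. On the one hand, exactly as in the proof of Theorem~\ref{Th3.1}, Bessel's inequality gives $\sum_{n=0}^\infty|C_n|^2\le 2\|u_0\|_{L^2(0,1)}^2\le 2M_0^2$, so in particular $\big(\sum_{n=M}^\infty|C_n|^2\big)^{1/2}\le\sqrt2\,M_0$. On the other hand, since $\alpha\ge\alpha_0$, $t\ge T_1>0$ and every index in the tail satisfies $n\ge M\ge 1$, one has $\lambda_n t=\alpha n^2\pi^2 t\ge \alpha_0 n^2\pi^2 T_1$, hence $e^{-\lambda_n t}\le e^{-\alpha_0 n^2\pi^2 T_1}$ uniformly for $t\in[T_1,T_2]$. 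These two reductions convert the $t$-dependent estimate into a single numerical series.

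Next I would apply the Cauchy--Schwarz inequality in $\ell^2$ to separate the coefficients from the exponentials:
\[
|e(M,t)|\le\sum_{n=M}^\infty|C_n|\,e^{-\alpha_0 n^2\pi^2 T_1}
\le\Big(\sum_{n=M}^\infty|C_n|^2\Big)^{1/2}\Big(\sum_{n=M}^\infty e^{-2\alpha_0 n^2\pi^2 T_1}\Big)^{1/2}.
\]
The first factor is already controlled by $\sqrt2\,M_0$, so the whole problem reduces to estimating the exponential sum $S:=\sum_{n=M}^\infty e^{-2\alpha_0 n^2\pi^2 T_1}$. Writing $a=\alpha_0\pi^2 T_1>0$, I would peel off the leading term and compare the remainder with an integral; since $x\mapsto e^{-2ax^2}$ is decreasing on $[M,\infty)$,
\[
S=e^{-2aM^2}+\sum_{n=M+1}^\infty e^{-2an^2}\le e^{-2aM^2}+\int_M^\infty e^{-2ax^2}\,dx.
\]
The integral is handled by the elementary trick $e^{-2ax^2}\le\frac{x}{M}e^{-2ax^2}$ for $x\ge M$, giving $\int_M^\infty e^{-2ax^2}\,dx\le\frac1M\int_M^\infty x\,e^{-2ax^2}\,dx=\frac{1}{4aM}e^{-2aM^2}$. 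Hence $S\le e^{-2aM^2}\big(1+\frac{1}{4aM}\big)$, i.e. $S^{1/2}\le e^{-aM^2}\sqrt{1+\frac{1}{4aM}}$.

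Finally I would linearize the square root using $\sqrt2\,\sqrt{1+x}\le\sqrt2+x$ for $x\ge0$ (valid because $(\sqrt2+x)^2-2(1+x)=2(\sqrt2-1)x+x^2>0$). Combining the pieces,
\[
|e(M,t)|\le\sqrt2\,M_0\,e^{-aM^2}\sqrt{1+\tfrac{1}{4aM}}\le\Big(\sqrt2+\tfrac{1}{4aM}\Big)M_0\,e^{-aM^2},
\]
which is exactly \dref{4.042} after substituting $a=\alpha_0\pi^2 T_1$. The one genuinely delicate point is keeping the constants sharp enough to land precisely on the stated bound: a naive termwise estimate $|C_n|\le\sqrt2\,M_0$ followed by a single integral comparison on $e^{-ax^2}$ produces a correction of size $\tfrac{1}{\sqrt2\,aM}$ rather than $\tfrac1{4aM}$. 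It is the Cauchy--Schwarz step, which replaces $a$ by $2a$ inside the integral and thereby supplies the factor $\tfrac14$, together with the algebraic inequality $\sqrt2\sqrt{1+x}\le\sqrt2+x$, that makes the constants match; everything else is routine.
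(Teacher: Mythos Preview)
Your proof is correct and follows the same overall architecture as the paper's: Cauchy--Schwarz to separate coefficients from exponentials, then peel off the $n=M$ term and dominate the rest of the exponential sum by $\int_M^\infty e^{-2ax^2}\,dx$. The substantive difference lies in how that Gaussian tail integral is estimated. The paper squares $F(\alpha,M,t)=\int_M^\infty e^{-\alpha x^2\pi^2 t}\,dx$, passes to polar coordinates over a quadrant, and invokes a continued-fraction bound for the incomplete gamma function $\Gamma(-\tfrac12,\cdot)$ to obtain $F(\alpha,M,t)<\tfrac{1}{\sqrt2\,M\pi^2\alpha t}e^{-\alpha M^2\pi^2 t}$. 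Your route is the one-line device $e^{-2ax^2}\le\tfrac{x}{M}e^{-2ax^2}$ for $x\ge M$, giving directly $\int_M^\infty e^{-2ax^2}\,dx\le\tfrac{1}{4aM}e^{-2aM^2}$; this is actually a \emph{sharper} constant ($\tfrac14<\tfrac{1}{2\sqrt2}$) with far less machinery. The subsequent linearizations of $\sqrt{1+x}$ differ cosmetically but both land on the stated bound.

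One small presentational point: the theorem asserts a strict inequality, while you write $\le$ throughout. In fact your integral comparison and the inequality $\sqrt2\sqrt{1+x}<\sqrt2+x$ are both strict for $x>0$, so the strict conclusion does follow; it would be worth flagging at least one of these as strict.
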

\begin{proof}
According to \dref{3.2} and \dref{4.041},
\begin{equation*}
\sum_{n=0}^\infty |C_n|^2\leq 2\|u_0\|^2 \leq 2M_0^2.
\end{equation*}
Introduce
\begin{equation}\label{4.04}
F(\alpha,M,t)=\int_M^\infty e^{-\alpha x^2\pi^2 t} \;dx,
\end{equation}
and three regions in  $(x,\,y)$ plane:
$$D=\left\{(x,y)\,\bigg|\,x\geq M,\; y\geq M\right\},\;D_1=\left\{(x,y)\,\bigg|\,x\geq y\geq M\right\},\;D_2=\left\{(x,y)\,\bigg|\,y>x\geq M\right\}.$$ It is obvious that
$D=D_1\cup D_2$ and $D_1\cap D_2=\emptyset$, and
$$
\begin{array}{ll}
F^2(\alpha,M,t)&=\disp \int_M^\infty e^{-\alpha x^2\pi^2 t} \;
dx\cdot \int_M^\infty e^{-\alpha y^2\pi^2 t}\;dy=\disp\iint_D
e^{-\alpha (x^2+y^2)\pi^2 t}\;dx dy\crr &=\disp \iint_{D_1}
e^{-\alpha (x^2+y^2)\pi^2 t} \; dx dy+\iint\limits_{D_2} e^{-\alpha
(x^2+y^2)\pi^2 t} \; dx dy\crr &\disp \triangleq I_1+I_2.
\end{array}
$$
By symmetry of the integration domains $D_1$ and $D_2$ with respect
to $x$ and $y$,  $I_1=I_2$. To compute  $I_1$, we use a double
integral in polar coordinates to convert it to iterated integrals.
The region $D_1$ under polar coordinates becomes $\widetilde{D}_1$:
$$
\widetilde{D}_1=\left\{(\rho,\theta)\,\bigg|\,\rho\geq
\dfrac{M}{\sin\theta},\;0<\theta\leq \dfrac{\pi}{4}\right\}.
$$
Then we can rewrite the double integral $I_1$ as an iterated
integral in polar coordinates:
\begin{equation}\label{4.09}
\begin{array}{ll}
I_1&=\disp \iint_{D_1} e^{-\alpha (x^2+y^2)\pi^2 t} \; dx dy
=\iint\limits_{\widetilde{D}_1} e^{-\alpha \rho^2 \pi^2 t}\rho \;
d\rho d\theta\crr &=\disp \int_0^\frac{\pi}{4} \; d\theta
\int_{\frac{M}{\sin\theta}}^\infty e^{-\alpha \rho^2 \pi^2 t}\rho \;
d\rho=\dfrac{1}{2\alpha\pi^2 t}\int_0^\frac{\pi}{4} e^{-\frac{\alpha
M^2\pi^2 t}{\sin^2 \theta}} \; d\theta.
\end{array}
\end{equation}
The variable substitution $u=\cot^2 \theta$ in \dref{4.09} yields
\begin{eqnarray}\label{4.10}
\nonumber I_1&=&\dfrac{1}{4\alpha\pi^2 t}\int_1^{+\infty} e^{-\alpha
M^2\pi^2 t
(1+u)}\dfrac{1}{(1+u)\sqrt{u}} \; du\\
\nonumber&<&\dfrac{1}{4\alpha\pi^2 t}e^{-\alpha M^2\pi^2
t}\int_{1}^{+\infty} e^{-\alpha M^2\pi^2 t u}u^{-\frac{3}{2}} \; du\\
&=&\dfrac{M}{4\pi\sqrt{\alpha t}}e^{-\alpha M^2\pi^2 t}\cdot\Gamma
\left(-\frac{1}{2},\;\alpha M^2\pi^2 t\right),
\end{eqnarray}
where
\begin{equation}\label{4.11}
\Gamma (a,x)=\int_x^\infty t^{a-1}e^{-t} \; dt,
\end{equation}
is the upper incomplete gamma function (\cite[Section
6.5]{Abramowitz}).  It  is known that \cite[p.263]{Abramowitz}
\begin{equation}\label{4.12}
\Gamma
(a,x)=e^{-x}x^a\left(\dfrac{1}{x+}\dfrac{1-a}{1+}\dfrac{1}{x+}\dfrac{2-a}{1+}\dfrac{2}{x+}\cdots\right),\;x>0,\;
|a|<\infty,
\end{equation}
from which we have
\begin{equation}\label{4.13}
\Gamma (a,x)<e^{-x}x^{a-1},\; x>0,\; a<0.
\end{equation}
This together with  \dref{4.10} gives
\begin{equation}\label{4.14}
I_1<\dfrac{1}{4M^2\pi^4\alpha^2 t^2}e^{-2\alpha M^2\pi^2 t}.
\end{equation}
Therefore,
\begin{equation}\label{4.05}
F(\alpha,M,t)=\sqrt{2I_1}<\dfrac{1}{\sqrt{2}M\pi^2\alpha
t}e^{-\alpha M^2\pi^2 t}.
\end{equation}
We now turn to the estimation of $|e(M,t)|$. It is computed that
\begin{equation}\label{4.06}
\begin{array}{ll}
\nonumber|e(M,t)|&\leq \disp\sum_{n=M}^\infty |C_n|e^{-\alpha
n^2\pi^2 t}\leq \left(\disp\sum_{n=M}^\infty
|C_n|^2\right)^\frac{1}{2}\left(\disp\sum_{n=M}^\infty e^{-2\alpha
n^2\pi^2 t}\right)^\frac{1}{2}\crr &< \disp
\sqrt{2}M_0\left[e^{-2\alpha M^2\pi^2 t}+\sum_{n=M+1}^\infty
\int_{n-1}^n e^{-2\alpha x^2\pi^2 t} \; dx\right]^\frac{1}{2}\crr
&=\sqrt{2}M_0\cdot\left[e^{-2\alpha M^2\pi^2 t}+
F(2\alpha,M,t)\right]^\frac{1}{2}\crr  &<\disp
\left(\sqrt{2}+\dfrac{1}{4M\pi^2\alpha t}\right)M_0e^{-\alpha
M^2\pi^2 t}.
\end{array}
\end{equation}
Since $\alpha\geq\alpha_0$, we finally obtain
\begin{equation}\label{4.07}
|e(M,t)|<\left(\sqrt{2}+\dfrac{1}{4M\pi^2\alpha_0
T_1}\right)M_0e^{-\alpha_0 M^2\pi^2 T_1},\;\forall\; t\in [T_1,T_2].
\end{equation}
This completes the proof of the theorem.
\end{proof}
\begin{remark}\label{Re4.1.1} It is seen from \dref{4.042} that
if $\alpha_0 M^2\pi^2 T_1$ is sufficiently large, then it indeed has
\begin{equation}\label{4.08} y(t)\approx\sum_{n=0}^{M-1} C_n
e^{-\lambda_n t},\; \forall\;  t\in [T_1,T_2],
\end{equation}
with the truncation error $e(M,t)$ estimation \dref{4.042}.
\end{remark}

\subsection{Matrix pencil method}

The matrix pencil method was first presented by  Hua and Sarkar in
\cite{Hua1,Hua2} for estimating signal parameters from a noisy
exponential  sequence. This  method has been proved to be quite
useful because of its computational efficiency and   low sensitivity
to the noise.

Suppose that the observed system response can be described by
\begin{equation}\label{4.2.1}
y(t)=x(t)+n(t)=\disp\sum_{i=1}^M R_i \exp(s_i t)+n(t),\; \forall\;
t\in [0,T].
\end{equation}
where $n(t)$ is the noise, $x(t)$ is the system response, $y(t)$ is
the noise contaminated observation, $M$ is the number of exponential
components, and $T$ is the maximal observation coverage time.

Let $T_s$ be the sampling period. The discrete form of \dref{4.2.1}
can be expressed as follows:
\begin{equation}\label{4.2.2}
y(kT_s)=x(kT_s)+n(kT_s)=\disp\sum_{i=1}^M R_i z_i^k+n(kT_s),\;
k=0,1,\dots,N-1,
\end{equation}
where $z_i=\exp(s_i T_s)$ are  the poles of response signal, and $N$
is the number of sample points which should be large enough.
Generally, all the number of exponential components $M$, the
amplitudes $R_i$, and the poles $z_i$ can be unknown. In what
follows, we show how to estimate these numbers simultaneously from
the observation $\left\{y(kT_s)\right\}_{k=0}^{N-1}$ by virtue of
the matrix pencil method.

Let $x_k=x(kT_s)$ and $y_k=y(kT_s)$, and define
\begin{equation}\label{4.2.3}
{\bf{x}_t}=[x_t,x_{t+1},\dots,x_{N-L+t-1}]^\top,\;
{\bf{y}_t}=[y_t,y_{t+1},\dots,y_{N-L+t-1}]^\top,\;t=0,1,2,\dots,L
\end{equation}
and
\begin{equation}\label{4.2.4}
\begin{array}{l}
\disp
X_0=[{\bf{x}_{L-1}},{\bf{x}_{L-2}},\dots,{\bf{x}_0}],\;\;Y_0=[{\bf{y}_{L-1}},{\bf{y}_{L-2}},\dots,{\bf{y}_0}],\crr\disp
X_1=[{\bf{x}_{L}},{\bf{x}_{L-1}},\dots,{\bf{x}_1}],\;\;Y_1=[{\bf{y}_{L}},{\bf{y}_{L-1}},\dots,{\bf{y}_1}],\crr\disp
X=[{\bf{x}_{0}},{\bf{x}_{1}},\dots,{\bf{x}_L}],\;\;Y=[{\bf{y}_{0}},{\bf{y}_{1}},\dots,{\bf{y}_L}],
\end{array}
\end{equation}
where the superscript ``$\top$" denotes the transpose, and  $L$ is
called the pencil parameter. It has been pointed out that the best
choices for $L$ are $N/3$ and $2N/3$, and all values satisfying
$N/3\leq L \leq 2N/3$ appear to be good choices in general
\cite{Hua3}. In this paper, the pencil parameter $L$ is always
chosen to be $N/3$ or $\lfloor N/3 \rfloor+1$ when $N/3$ is not an
integer. Here and in the sequel,  $\lfloor \cdot\rfloor$ is as usual
the floor function and $\lfloor x \rfloor$ denotes the integer part
of the number $x$.

Suppose that the singular value decomposition (SVD) of $Y$ is
$Y=U\Sigma V^\top,$ where $U$ and $V$ are $(N-L)\times (N-L)$ and
$(L+1)\times (L+1)$ orthogonal matrices, respectively, $\Sigma$ is
an $(N-L)\times (L+1)$ diagonal matrix with entries $\{\sigma_i\}$
in main diagonal to be the singular values of $Y$.

\subsubsection{The estimation of $M$}

In case of noiseless observation, {\it i.e.} $n(kT_s)=0$ in
\dref{4.2.2}, $M$ is equal to the number of nonzero singular values
of the matrix $X$ defined in \dref{4.2.4}, or equivalently the rank
of $X$, that is, $M=\rank(X).$

In case of the noise contaminated observation,  however, the
elements that are originally zeros in main diagonal of $\Sigma$
might not be zeros anymore due to influence of noise. Nevertheless,
the values of these elements will be very small as long as the noise
is very weak in comparison to the signal (see, e.g., \cite{Mirsky}).
Thus, an effective practical method for estimating the number $M$ is
first to choose the maximal singular value $\sigma_{max}$ of $Y$ and
assign a threshold $\varepsilon$ for the singular values, e.g.,
$\varepsilon=10^{-10}$, and then treat any small singular value
$\sigma_i$ which satisfies $\sigma_i/\sigma_{max}<\varepsilon$ to be
zero. Therefore, $M$ can be estimated by
\begin{equation}\label{4.2.8}
M=\#\bigg\{\sigma_i\Big|\; \sigma_i\; \mbox{is the singular value
of}\; Y\;\mbox{which satisfies}\;\dfrac{\sigma_i}{\sigma_{max}}\geq
\varepsilon\bigg\}.
\end{equation}
where $\# S$, here and in the sequel, denotes the number of elements
in the set $S$.

\subsubsection{Estimation of  poles $\left\{z_i\right\}_{i=1}^M$}

In case of noiseless observation, it has been proved in
\cite[Theorem 2.1]{Hua3} that the poles $\left\{z_i\right\}_{i=1}^M$
in \dref{4.2.2} are the $M$ eigenvalues of the matrix $X_0^\dag X_1$
when $M\leq L\leq N-M$, here and in the sequel the superscript
``\dag" denotes the Moore-Penrose inverse or pseudoinverse. Since
$X_0^\dag X_1$ has rank $M\leq L$, there are also $L-M$ zero
eigenvalues for the matrix product.

In case of the noise contaminated observation, suppose that the SVD
of $Y_0$ is $Y_0=U_0\Sigma_0 V_0^\top$, and the rank-$M$ truncated
pseudoinverse $Y_{0,M}^\dag$ is defined as
\begin{equation}\label{4.2.10}
Y_{0,M}^\dag =\sum_{i=1}^M \dfrac{1}{\sigma_i}v_i u_i^\ast
=V_{0,M}A^{-1} U_{0,M}^*,
\end{equation}
where $\left\{\sigma_i\right\}_{i=1}^M$ are the $M$ largest singular
values of $Y_0$; $v_i$'s and $u_i$'s are the corresponding singular
vectors, and
\begin{equation}\label{4.2.10.2}
V_{0,M}=\{v_1,v_2,\dots,v_M\},\; U_{0,M}=\{u_1,u_2,\dots,u_M\},\;
A=\diag\{\sigma_1,\sigma_2,\dots,\sigma_M\}.
\end{equation}
The superscript ``$*$'' in \dref{4.2.10} denotes the conjugate
transpose.

It is shown in \cite{Hua3} that the estimates of the poles
$\left\{z_i\right\}_{i=1}^M$ can be realized by computing the $M$
nonzero eigenvalues of $Y_{0,M}^\dag Y_1$, or equivalently, the
eigenvalues of the $M\times M$ matrix
\begin{equation}\label{4.2.11}
Z_E=A^{-1}U_{0,M}^\ast Y_1V_{0,M}.
\end{equation}
Then the $\left\{s_i\right\}_{i=1}^M$ in \dref{4.2.1} can be
obtained by
\begin{equation}\label{4.2.12}
s_i=\dfrac{\ln z_i}{T_s},\; i=1,2,\dots,M.
\end{equation}

\begin{remark}\label{Re4.33.1} It is easily seen that
the matrix pencil method contains truncated singular value
decomposition (TSVD) (see, e.g.,  \cite{Hansen})
 as a regularization method
to estimate $M$ and $\{z_i\}_{i=1}^M$.
\end{remark}

\subsubsection{Estimation of amplitudes $\left\{R_i\right\}_{i=1}^M$}

Having estimated the number $M$ of the  exponential components,  and
all the poles $\left\{z_i\right\}_{i=1}^M$, the amplitudes $R_i$ can
be estimated by solving the following linear least squares problem
\begin{equation}\label{4.2.13}
\left\{R_i\right\}_{i=1}^M=\argmin \sum_{k=0}^{N-1} \bigg[
y_k-\sum_{i=1}^M R_i z_i^k\bigg]^2.
\end{equation}

\subsection{Identification algorithm}

Suppose that $0<T_1<T_2<T_3$ are three arbitrary positive numbers,
and the control function $f(t)$ is chosen  as in  \dref{3.8} and the
corresponding observation data is $\{y(t)=u(0,t;f,u_0)|\; t\in [T_1,
T_3]\}$. In this section, we formulate the identification for the
coefficient and initial value in several steps.

{\it  Step 1: Estimate several eigenvalues of system operator $\A$
from the observation without control by the matrix pencil method.}

Specifically, let $T_1=t_0<t_1<\cdots <t_{N_1}=T_2$ be the uniform
grids of $[T_1,T_2]$ with the sampling period
$T_s=\frac{T_2-T_1}{N_1}$, and the measured values at sample points
are
\begin{equation}\label{4.21}
y_i=y(t_i)=\disp\sum_{n=0}^\infty C_n e^{-\lambda_n
t_i}=\disp\sum_{k=0}^{K-1} \left(C_{n_k}e^{-\lambda_{n_k}
T_1}\right) e^{-(\lambda_{n_k} T_s) i},\; i=0,1,\dots,N_1-1,
\end{equation}
where $K=\# \K$  with $\K$ being  defined by \dref{3.3} and the
series $\left\{C_{n_k}\right\}_{k=0}^{K-1}$ consists of all the
nonzero elements in the series $\left\{C_n\right\}_{n\in \N_0}$ by
removing all the zero ones. Then the number $M$ of the estimable
eigenvalues and the approximate eigenvalues
$\left\{\widetilde{\lambda}_{n_k}\right\}_{k=0}^{M-1}$ can be
obtained by virtue of the matrix pencil method following the process
introduced in sections 3.2.1 and 3.2.2.

\begin{remark}\label{Re4.3.1}
As stated in Theorem \ref{Th3.1}, it is unknown whether the initial
value $u_0(x)$ is orthogonal to some of the eigenvectors
$\{\phi_n\}_{n\in \N_0}$. In case that $\langle u_0,\phi_n\rangle
=0$ for some $n\in \N_0$, then $C_n=0$ and the observation has
nothing to do with the term $C_n e^{-\lambda_n t_i}$. It is
noteworthy that the
$\left\{\widetilde{\lambda}_{n_k}\right\}_{k=0}^{M-1}$ recovered in
{\it Step 1} are the approximations of some eigenvalues of system
operator $\A$, but may not be the first $M$ eigenvalues, {\it i.e.}
the relationships $\widetilde{\lambda}_{n_k}\approx\lambda_k\;
(=\alpha k^2 \pi^2)$ are not always true. In fact, it is true only
when $n_k=k$ or $\langle u_0,\phi_k\rangle \neq 0$ for
$k=0,1,\dots,M-1$, which is the case mentioned in \cite{Suzuki1},
where such an initial value is said to be generic and in this case
the {\it Steps 3} and {\it 4} below are not necessary anymore. In
other words, when $\langle u_0,\phi_k\rangle =0$ for some $k$, we
can not always recover $\alpha$ from
$\left\{\widetilde{\lambda}_{n_k}\right\}$ directly.
\end{remark}

{\it  Step 2: Estimate the coefficients
$\left\{\widetilde{C}_{n_k}\right\}_{k=0}^{M-1}$ that are
corresponding to
$\left\{\widetilde{\lambda}_{n_k}\right\}_{k=0}^{M-1}$ from
\dref{4.21} by solving the linear least square problem following
\begin{equation}\label{4.23}
\left\{\widetilde{C}_{n_k}\right\}_{k=0}^{M-1}=\argmin\sum_{i=0}^{N_1-1}
\left[y_i-\sum_{k=0}^{M-1}\widetilde{C}_{n_k}
e^{-\widetilde{\lambda}_{n_k} t_i}\right]^2.
\end{equation}}

\begin{remark}\label{Re4.3.2}
After obtaining
$\left\{\left(\widetilde{C}_{n_k},\widetilde{\lambda}_{n_k}\right)\right\}_{k=0}^{M-1}$,
the  control free part of the observation $u(0,t;0,u_0)$ can be
estimated as
\begin{equation}\label{4.24}
u(0,t;0,u_0)\approx \sum_{k=0}^{M-1}\widetilde{C}_{n_k}
e^{-\widetilde{\lambda}_{n_k} t},\; t>0.
\end{equation}
\end{remark}

{\it  Step 3:  Estimate an  approximation of $\alpha$ by obtaining
the first several eigenvalues of the operator $\A$ through the
observation data $\{y(t)|\; t\in [T_2,\;T_3]\}$ by virtue of the
matrix pencil method.}

Similar to {\it Step 1}, let $T_2=t_0<t_1<\cdots <t_{N_2}=T_3$ be
the uniform grids of $[T_2,T_3]$ with the sampling period
$T_s^\prime=\frac{T_3-T_2}{N_2}$, and the control is chosen to be
$f(t)=1$ for $t\in [T_2,T_3]$. Then  from \dref{4.24} we obtain
\begin{equation}\label{4.25}
\begin{array}{ll}
y(t_i)&=\disp u(0,t_i;0,u_0)+u(0,t_i;f,0)\crr &=\disp
\sum_{n=0}^\infty C_ne^{-\lambda_n t_i}-\int_0^{t_i}
G(t_i-s,0,0)f(s)ds\crr &\disp \approx\sum_{k=0}^{M-1}
\widetilde{C}_{n_k} e^{-\widetilde{\lambda}_{n_k} t_i}-\dfrac{1}{3
\alpha}-(t_i-T_2)+\disp\sum_{n=1}^\infty\dfrac{2}{\lambda_n}e^{-\lambda_n
(t_i-T_2)}\crr &=\disp \sum_{k=0}^{M-1} \widetilde{C}_{n_k}
e^{-\widetilde{\lambda}_{n_k} t_i}-\dfrac{1}{3 \alpha}-T_s^\prime
i+\sum_{n=1}^\infty\dfrac{2}{\lambda_n}e^{-\lambda_n T_s^\prime i}.
\end{array}
\end{equation}
Let
\begin{equation}\label{4.26.1}
y_i^\prime=y(t_i)-\sum_{k=0}^{M-1} \widetilde{C}_{n_k}
e^{-\widetilde{\lambda}_{n_k} t_i}+T_s^\prime i,\;
i=0,1,\dots,N_2-1,
\end{equation}
and
\begin{equation}\label{4.26}
C_0^\prime=-\dfrac{1}{3\alpha},\;\; \lambda_0^\prime=0,\;
C_n^\prime=\dfrac{2}{\lambda_n},\;\; \lambda_n^\prime=\lambda_n
T_s^\prime,\; n\in\N.
\end{equation}
Then \dref{4.25} becomes
\begin{equation}\label{4.27}
y_i^\prime\approx\sum_{n=0}^\infty C_n^\prime e^{-\lambda_n^\prime
i},\; i=0,1,\dots,N_2-1.
\end{equation}

Next, we estimate $\left\{\left(C_n^\prime,
\lambda_n^\prime\right)\right\}_{n=0}^{M^\prime-1}$ from \dref{4.27}
by repeating the processes  in {\it Steps 1} and {\it 2}. Then
$\alpha$ can be obtained from \dref{2.2} and \dref{4.26}.
\begin{remark}\label{Re4.3.3}
The estimation process for $\left\{\left(C_n^\prime,
\lambda_n^\prime\right)\right\}_{n=0}^{M^\prime-1}$ from \dref{4.27}
is slightly different in {\it Step 1} since none of the
$\left\{C_n^\prime\right\}_{n=0}^{M^\prime-1}$ is zero although they
are also unknown. Hence, we can recover $\alpha$ from one of the
following relations:
\begin{equation}\label{4.28}
\lambda_n^\prime=\alpha n^2\pi^2T_s^\prime,\;
n=1,2,\dots,M^\prime-1,
\end{equation}
and
\begin{equation}\label{4.29}
C_n^\prime=\dfrac{2}{\alpha n^2\pi^2},\; n=1,2,\dots,M^\prime-1.
\end{equation}
However, the $\alpha$ obtained from \dref{4.28} may be different
from that obtained from \dref{4.29} since both
$\left\{C_n^\prime\right\}$ and $\left\{\lambda_n^\prime\right\}$
are estimated values rather than exact ones. In  simulations, the
pairs $(C_n^\prime,\lambda_n^\prime)$ that satisfy
\begin{equation}\label{4.30}
C_n^\prime \lambda_n^\prime\approx 2T_s^\prime, \;
n=1,2,\dots,M^\prime-1.
\end{equation}
seem to be more credible to estimate $\alpha$. Actually, the
estimated coefficient here is only for identification
$\left\{n_k\right\}_{k=0}^{M-1}$ from
$\left\{\widetilde{\lambda}_{n_k}\right\}_{k=0}^{M-1}$ which is
shown in succeeding {\it Step 4}. Finally, we emphasize that the
identification of $\alpha$ does not depend on the sampling period
but the special structure of eigenvalues \dref{4.28}. If there is no
such structure for eigenvalues, our idea of transforming the
identification of $\alpha$ to be a zero initial value problem can
simplify the problem.
\end{remark}

{\it  Step 4:  Estimate $\alpha$ from
$\left\{\widetilde{\lambda}_{n_k}\right\}_{k=0}^{M-1}$ and
reconstruct the initial state $u_0(x)$}.

To be specific, after estimating
$\left\{\left(\widetilde{C}_{n_k},\widetilde{\lambda}_{n_k}\right)\right\}_{k=0}^{M-1}$
in {\it Steps 1, 2}, and recovering an approximation of $\alpha$ in
{\it Step 3}, we can now determine the series
$\K_M=\left\{n_k\right\}_{k=0}^{M-1}$ by
\begin{equation}\label{4.31}
n_k=\left\lfloor \sqrt{\dfrac{\widetilde{\lambda}_{n_k}}{\alpha
\pi^2}}\;\right\rceil,\; k=0,1,\dots,M-1,
\end{equation}
where $\lfloor x \rceil$ denotes the integer nearest to $x$. Then,
$\alpha$ can be estimated by
\begin{equation}\label{4.1.37}
\alpha_k=\dfrac{\widetilde{\lambda}_{n_k}}{n_k^2\pi^2} \mbox{  for }
n_k\neq0, \;  k=0,1,\dots,M-1.
\end{equation}
An error analysis between the estimated coefficient $\alpha_k$ and
the real value is discussed in section 5.

Now we turn to initial value. It is clear from \dref{2.11} that
\begin{equation}\label{4.33}
y(t)=\disp\sum_{n=0}^\infty A_n(0)e^{-\lambda_n t}
=\disp\sum_{n=0}^{\widetilde{M}-1} A_n(0)e^{-\alpha n^2\pi^2
t}+e(\widetilde{M},t),\; \forall\; t\in   [T_0,T_2),
\end{equation}
where $T_0\in (0,T_2)$. It follows from Theorem \ref{Th4.1} that we
can choose proper $\widetilde{M}$ and $T_0$ such that
$\left|e(\widetilde{M},t)\right|$ is sufficiently small. Suppose
that only observation at the sample points
$T_0=t_0<t_1<\dots<t_{N}=T_2$ are available. Then the coefficients
$\left\{A_n(0)\right\}$ can be estimated by solving the following
problem properly
\begin{equation}\label{4.35}
\sum_{i=0}^{N-1} \left[y(t_i)-\sum_{n=0}^{\widetilde{M}-1}
A_n(0)e^{-\alpha n^2\pi^2 t_i}\right]^2,
\end{equation}
or equivalently, finding the least squares solution of the matrix
equation
\begin{equation}\label{4.35.2}
CA=b,
\end{equation}
where $C$ is an $N\times \widetilde{M}$ matrix with the $(i,j)$
element
\begin{equation}\label{4.35.3}
C(i,j)=e^{-\alpha (j-1)^2\pi^2 t_{i-1}},
\end{equation}
and
\begin{equation}\label{4.35.4}
A=[A_0(0),A_1(0),\cdots,A_{\widetilde{M}-1}(0)]^\top,\;
b=[y(t_0),y(t_1),\cdots,y(t_{N-1})]^\top.
\end{equation}
Since the reconstruction of the initial value is known to be
ill-posed, which results in  the resulting matrix equation
\dref{4.35.2} to be ill-posed as well. In order to obtain stable
results, some regularization method is required. Here we use the
TSVD \cite{Hansen} to solve the matrix equation \dref{4.35.2}.

Suppose that the SVD of matrix $C$ is
\begin{equation}\label{4.35.5}
C=U_C\Sigma_C V_C^\top,
\end{equation}
where $U_C=[u_1^\prime,u_2^\prime,\cdots,u_N^\prime]$ and
$V_C=[v_1^\prime,v_2^\prime,\cdots,v_{\widetilde{M}}^\prime]$ are
orthonormal matrices with column vectors named left and right
singular vectors, respectively. $\Sigma_C=\diag
(\sigma_1,\sigma_2,\cdots)$ is a diagonal matrix with non-negative
diagonal elements being the singular values of $C$. In the TSVD
method, the matrix $C$ is replaced by its rank-$k$ approximation,
and the regularized solution is given by
\begin{equation}\label{4.35.6}
A_{reg}=\sum_{i=1}^k\dfrac{u_i^{\prime\top}b}{\sigma_i}v_i^\prime.
\end{equation}
where $k\leq \rank(C)$ is the regularization parameter. In this
paper, we use the generalized cross-validation (GCV) criterion
\cite{Golub} to determine the regularization parameter. The GCV
criterion determines the optimal regularization parameter $k$ by
minimizing the following GCV function:
\begin{equation}\label{4.35.7}
G(k)=\dfrac{\|CA_{reg}-b\|^2}{(trace(I_N-CC^I))^2},
\end{equation}
where $C^I$ is the matrix which produces the regularized solution
after being multiplied with the right-hand side $b$, i.e.
$A_{reg}=C^Ib$.

Having obtained the regularized solution $A_{reg}$, then the initial
value can be estimated by the asymptotic Fourier series expansion:
\begin{equation}\label{4.36}
u_0(x)\approx \sum_{n=0}^{\widetilde{M}-1} A_n(0) \cos n \pi x.
\end{equation}

\begin{remark}\label{Re4.3.4}
It is obvious that the reconstructed  initial value
$\widetilde{u}_0(x)$ by \dref{4.36} is an approximated Fourier
series expansion of $u_0(x)$ with the first $\widetilde{M}$ terms.
In fact, since  $\alpha$ has been estimated, there are various
methods for the initial state reconstruction, see, e.g.,
\cite{Ramdani,Xu} and the references therein. Compared with those
methods, the method here is more direct and simple.
\end{remark}

\section{Error analysis}

Noise sensitivity  of the matrix pencil method for estimating finite
signal parameters from a noisy exponential sequence is analyzed
 in \cite{Hua3}.  But our case is different in  two aspects. First,
the number of unknown parameters in  the infinite spectral
estimation is not finite. Second, the perturbation, that is, the
remainder term $e(M,t)$ in \dref{4.03},  is not random.  In this
section, we establish an error analysis by applying the matrix
pencil method to the infinite spectral estimation problem:
\begin{equation}\label{5.1.1}
y(t)=\sum_{n=0}^\infty C_n e^{-\lambda_n t},\;  \forall\; t\in
[T_1,T_2].
\end{equation}
We may suppose without loss of generality that $C_n\neq 0$ for any
$n\in\N_0$. In fact, we are only concerned with the first $M$
nonzero terms in series \dref{5.1.1} which is written in a clear way
as
\begin{equation}\label{5.1.2}
y(t)=x(t)+e(M,t)=\sum_{n=0}^{M-1} C_n e^{-\lambda_n
t}+\sum_{n=M}^\infty C_n e^{-\lambda_n t},\; \forall\; t\in
[T_1,T_2],
\end{equation}
where $M$ is defined as \dref{4.2.8}. Let $T_1=t_0<t_1<\cdots
<t_{N-1}=T_2$ be the points on a uniform grid of $[T_1,T_2]$ with
the sampling period $T_s=\frac{T_2-T_1}{N-1}$, and hence the
observation data at sample points, $t_i$, are
\begin{equation}\label{5.3.5}
\begin{array}{ll}
y_i=y(t_i)&=\disp \sum_{n=0}^\infty \left(C_{n}e^{-\lambda_{n}
T_1}\right) e^{-(\lambda_{n} T_s) i}= \sum_{n=0}^{M-1}
\left(C_{n}e^{-\lambda_{n} T_1}\right) e^{-(\lambda_{n} T_s)
i}+e(M,t_i)\crr  &\disp \triangleq\sum_{n=0}^{M-1}
\left(C_{n}e^{-\lambda_{n} T_1}\right) z_n^i+e(M,t_i),
\end{array}
\end{equation}
where $z_n=e^{-\lambda_n T_s}$. By Theorem \ref{Th4.1}, it follows
that
\begin{equation}\label{5.1.4}
\begin{array}{ll}
|y_i-x_i|=|e(M,t_i)|& \disp
<\left(\sqrt{2}+\dfrac{1}{4M\pi^2\alpha_0 t_i}\right)M_0e^{-\alpha
M^2\pi^2 t_i}\crr &\disp
\leq\left(\sqrt{2}+\dfrac{1}{4M\pi^2\alpha_0
T_1}\right)M_0e^{-\alpha_0 M^2\pi^2 T_1}e^{-\alpha_0 M^2\pi^2 T_s
i},
\end{array}
\end{equation}
where $x_i=\sum_{n=0}^{M-1} \left(C_{n}e^{-\lambda_{n} T_1}\right)
z_n^i$.  Define the matrices $X_0,\;Y_0,\;X_1,\;Y_1$ as
\dref{4.2.3}-\dref{4.2.4}. Theorem \ref{Th5.1.1} below gives the bounds of
$\|Y_0-X_0\|_F$ and $\|Y_1-X_1\|_F$, where $\|\cdot\|_F$ denotes the
matrix Frobenius norm.

\begin{theorem}\label{Th5.1.1}
Let the number of sample points $N>9$ and
\begin{equation}\label{5.2.1}
\theta=2\alpha_0 M^2\pi^2T_s.
\end{equation}
Then
\begin{equation}\label{5.1.5}
\|Y_0-X_0\|_F<\left(\sqrt{2}+\dfrac{1}{4M\pi^2\alpha_0
T_1}\right)M_0e^{-\alpha_0 M^2\pi^2
T_1}\sqrt{M_{\theta,L}+\left(1+\dfrac{1}{\theta}\right)^2},
\end{equation}
and
\begin{equation}\label{5.1.6}
\|Y_1-X_1\|_F<\left(\sqrt{2}+\dfrac{1}{4M\pi^2\alpha_0
T_1}\right)M_0e^{-\alpha_0 M^2\pi^2
T_1}\sqrt{M_{\theta,L+1}+\dfrac{1}{\theta}\left(1+\dfrac{1}{\theta}\right)e^{-\theta}},
\end{equation}
where
\begin{equation}\label{5.2.11}
M_{\theta,L}=\left\{
\begin{array}{ll}
e^{-\theta},& \theta\geq 1,\\
\dfrac{2}{\theta}e^{-1},& \dfrac{1}{L-1}<\theta<1,\\
(L-1)e^{-(L-1)\theta},& 0<\theta\leq \dfrac{1}{L-1},
\end{array}\right.
\end{equation}
\end{theorem}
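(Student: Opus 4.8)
The plan is to peel the two matrix estimates down to a single scalar inequality about a weighted sum of exponentials, get the geometric ``bulk'' term by elementary calculus, and then recover the piecewise factor $M_{\theta,L}$ by a case analysis in $\theta$. Write $\beta=\left(\sqrt{2}+\frac{1}{4M\pi^2\alpha_0 T_1}\right)M_0 e^{-\alpha_0 M^2\pi^2 T_1}$ for the constant appearing in \dref{5.1.4}. Since $\alpha_0 M^2\pi^2 T_s=\theta/2$ by \dref{5.2.1}, Theorem \ref{Th4.1} (in the form \dref{5.1.4}) gives $|y_i-x_i|=|e(M,t_i)|<\beta\,e^{-\theta i/2}$, hence $|e(M,t_i)|^2<\beta^2 e^{-\theta i}$. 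Because the matrices in \dref{4.2.4} are assembled from the sliding windows $\mathbf{y}_t-\mathbf{x}_t$, each entry of $Y_0-X_0$ equals some $e(M,t_{t+r})$ with $0\le t\le L-1$, $0\le r\le N-L-1$, so that $\|Y_0-X_0\|_F^2=\sum_{t=0}^{L-1}\sum_{r=0}^{N-L-1}|e(M,t_{t+r})|^2$. Factoring out $\beta^2$, the whole theorem reduces to bounding the scalar double sum $\sum_{t,r}e^{-\theta(t+r)}$ by $M_{\theta,L}+(1+1/\theta)^2$, and the analogous sum for $Y_1-X_1$ (whose windows run over $t=1,\dots,L$) by $M_{\theta,L+1}+\frac1\theta(1+\frac1\theta)e^{-\theta}$.

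For the bulk term I would sum geometrically. Each inner factor $\sum_{r=0}^{N-L-1}e^{-\theta r}$ and each outer factor $\sum_{t=0}^{L-1}e^{-\theta t}$ is dominated by $\frac{1}{1-e^{-\theta}}$, and the elementary inequality $e^{\theta}\ge 1+\theta$ yields $\frac{1}{1-e^{-\theta}}\le 1+\frac1\theta$; the product of the two factors then supplies the $(1+1/\theta)^2$ term in \dref{5.1.5}. The only structural change for $Y_1-X_1$ is that the window index starts at $t=1$ rather than $t=0$, which pulls out one factor $e^{-\theta}$ and shifts all finite-length bookkeeping from $L$ to $L+1$; this is exactly what produces the $e^{-\theta}$ and the appearance of $M_{\theta,L+1}$ in \dref{5.1.6}.

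The delicate piece is the finite-window correction $M_{\theta,L}$. When the Hankel multiplicities are kept rather than summed to infinity, the linear weight $t+r$ forces a residual of the shape $\sum_{1\le t\le L-1}t\,e^{-\theta t}$, and over a \emph{finite} range this cannot be replaced by its value on $(0,\infty)$ without sacrificing the sharpness that \dref{5.1.6} needs (the naive estimate $e^{-\theta}(1+1/\theta)^2$ for $Y_1-X_1$ already exceeds the claimed right-hand side once $\theta\ge1$). I would therefore control this residual by the extremal value of $g(t)=t\,e^{-\theta t}$ on $[1,L-1]$: its interior maximizer is $t^\ast=1/\theta$, and the three positions of $t^\ast$ relative to the endpoints $1$ and $L-1$ --- that is, $t^\ast\le 1$, $1<t^\ast<L-1$, and $t^\ast\ge L-1$ --- correspond precisely to $\theta\ge1$, $\frac{1}{L-1}<\theta<1$, and $0<\theta\le\frac{1}{L-1}$, giving the three branches $e^{-\theta}$, $\frac2\theta e^{-1}$, and $(L-1)e^{-(L-1)\theta}$ of \dref{5.2.11}. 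This is where the hypothesis $N>9$ enters: it guarantees $L-1\ge 3$ (so the middle regime is a genuine nonempty interval) and $N-L-1\ge L-1$ (so the windows are long enough for the multiplicity bookkeeping above to hold).

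The main obstacle, then, is this case analysis: arranging the split so that the leading geometric part collapses cleanly to $(1+1/\theta)^2$ (respectively to $\frac1\theta(1+\frac1\theta)e^{-\theta}$), while the remaining finite sum is uniformly dominated by the single closed form $M_{\theta,L}$ (respectively $M_{\theta,L+1}$) across all three $\theta$-regimes, together with keeping the index and dimension bookkeeping for $Y_1-X_1$ exact enough that no boundary column or row is dropped. Once these two scalar inequalities are established, \dref{5.1.5} and \dref{5.1.6} follow immediately by taking square roots and reinstating the factor $\beta$.
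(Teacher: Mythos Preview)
Your factoring observation for \dref{5.1.5} is actually cleaner than the paper's argument: since $\sum_{t=0}^{L-1}\sum_{r=0}^{N-L-1}e^{-\theta(t+r)}\le\bigl(\tfrac{1}{1-e^{-\theta}}\bigr)^2\le(1+1/\theta)^2$ and $M_{\theta,L}>0$, the bound \dref{5.1.5} follows immediately. So for that half of the theorem your route is shorter and renders the $M_{\theta,L}$ term superfluous.

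The gap is in \dref{5.1.6}. You correctly note that the factored bound $e^{-\theta}(1+1/\theta)^2$ overshoots the target for $\theta\ge1$, but your proposed repair does not close it. The residual $\sum_{i=1}^{L-1} i\,e^{-\theta i}$ does not appear as a correction to the factored product; it appears only if you abandon the factoring altogether and count Hankel multiplicities from the start, writing $\|Y_0-X_0\|_F^2\le\beta^2(S_1+S_2+S_3+S_4)$ with $S_1=\sum_{i=1}^{L-1}ie^{-\theta i}$, $S_2=\sum_{i=0}^{L-1}e^{-\theta i}$, $S_3=\sum_{i=1}^{L}ie^{-\theta(N-1-i)}$, $S_4=L\sum_{i=L}^{N-L-2}e^{-\theta i}$. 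Bounding $S_1$ by ``the extremal value of $g(t)=te^{-\theta t}$'' is then too crude: that gives $(L-1)\max g$, not the paper's $M_{\theta,L}$. What is actually needed is integral comparison, $S_1\le M_{\theta,L}+\int_1^{L-1}xe^{-\theta x}\,dx$, where $M_{\theta,L}$ collects the one (monotone cases) or two (turning-point case) boundary terms not absorbed by the integral --- that is why the middle branch is $2g(1/\theta)$ rather than a single maximum. The four pieces $S_1,\dots,S_4$ then combine with explicit cancellations, and the same computation with indices shifted by one produces \dref{5.1.6}.

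Your account of where $N>9$ enters is also off. It is not used to make the middle $\theta$-regime nonempty or to order the window lengths; it is used to force $L\ge4$, which is exactly what is needed for the inequality $\theta<e^{(L-3)\theta}$ (valid for all $\theta>0$ only when $L\ge4$) that controls the tail piece $S_3$. Without this, the cancellations among $S_1,\dots,S_4$ do not collapse to $(1+1/\theta)^2$.
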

\begin{proof}
Since  both  matrices $X_0$ and $Y_0$ admit the Hankel structure, it
is easy to deduce from the definition of Frobenius norm that
$$
\begin{array}{ll}
\|Y_0-X_0\|_F^2&=\disp
\sum_{i=0}^{L-1}(i+1)\left|y_i-x_i\right|^2+\sum_{i=1}^L
i\left|y_{N-1-i}-x_{N-1-i}\right|^2
+L\sum_{i=L}^{N-L-2}\left|y_i-x_i\right|^2\crr &\leq\disp
\left(\sqrt{2}+\dfrac{1}{4M\pi^2\alpha_0
T_1}\right)^2M_0^2e^{-2\alpha_0 M^2\pi^2
T_1}\bigg[\sum_{i=0}^{L-1}ie^{-\theta i}+\sum_{i=0}^{L-1}e^{-\theta
i}\crr  &\hspace{0.3cm}+\disp \sum_{i=1}^L ie^{-\theta (N-1-i)}
+L\sum_{i=L}^{N-L-2}e^{-\theta i} \bigg]\crr &\disp \triangleq
\left(\sqrt{2}+\dfrac{1}{4M\pi^2\alpha_0
T_1}\right)^2M_0^2e^{-2\alpha_0 M^2\pi^2
T_1}\left[S_1+S_2+S_3+S_4\right].
\end{array}
$$
To  estimate $S_1$, we introduce
\begin{equation}\label{5.1.7}
f(x)=xe^{-\theta x},\; x\geq 0,
\end{equation}
which satisfies
$$f^\prime (x)=(1-\theta x)e^{-\theta x},\; x\geq 0.$$
There are three different cases according to the values of $\theta$.

{\it  Case 1:  $\theta\geq 1$.}  In this case,  $f^\prime (x)\leq 0$
for  $x\geq 1$. Hence
\begin{equation}\label{5.1.8} ie^{-\theta
i}\leq \int_{i-1}^i xe^{-\theta x}dx,\; i=2,3,\dots, L-1.
\end{equation}
Therefore,
$$
S_1=e^{-\theta}+\sum_{i=2}^{L-1}ie^{-\theta i} \leq
e^{-\theta}+\sum_{i=2}^{L-1}\int_{i-1}^i xe^{-\theta x}dx
=e^{-\theta}+\int_{1}^{L-1} xe^{-\theta x}dx.
$$

{\it  Case 2:  $\frac{1}{L-1}<\theta<1$.}  In this case,   $f^\prime
(x)\geq 0$ for $0\leq x\leq \dfrac{1}{\theta}$, and $f^\prime (x)<0$
for $x> \dfrac{1}{\theta}$, which imply
$$f(x)\leq f({\theta}^{-1})=\dfrac{1}{\theta}e^{-1},\; x\geq
0,
$$
and
\begin{equation}\label{5.2.8} ie^{-\theta i}\leq \int_i^{i+1}
xe^{-\theta x}dx,\; i=1,\dots,\left\lfloor
\dfrac{1}{\theta}\right\rfloor -1.
\end{equation}
\begin{equation}\label{5.2.9}
ie^{-\theta i}\leq \int_{i-1}^i xe^{-\theta x}dx,\; i=\left\lfloor
\dfrac{1}{\theta}\right\rfloor +2,\dots,L-1.
\end{equation}
Thus
\begin{eqnarray*}
S_1&=&\sum_{i=1}^{L-1}ie^{-\theta i}
=\sum_{i=1}^{\lfloor\frac{1}{\theta}\rfloor-1}ie^{-\theta i}
+\left\lfloor\dfrac{1}{\theta}\right\rfloor e^{-\theta
\lfloor\frac{1}{\theta}\rfloor}+\left(\left\lfloor\dfrac{1}{\theta}\right\rfloor+1\right)e^{-\theta
(\lfloor\frac{1}{\theta}\rfloor+1)}
+\sum_{i=\lfloor\frac{1}{\theta}\rfloor+2}^{L-1}ie^{-\theta i}\\
&\leq&\sum_{i=1}^{\lfloor\frac{1}{\theta}\rfloor-1}\int_i^{i+1}xe^{-\theta
x}dx
+f\left(\left\lfloor\dfrac{1}{\theta}\right\rfloor\right)+f\left(\left\lfloor\dfrac{1}{\theta}\right\rfloor+1\right)
+\sum_{i=\lfloor\frac{1}{\theta}\rfloor+2}^{L-1}\int_{i-1}^ixe^{-\theta x}dx\\
&\leq&\int_1^{\lfloor\frac{1}{\theta}\rfloor}xe^{-\theta x}dx
+2f\left(\dfrac{1}{\theta}\right)+\int_{\lfloor\frac{1}{\theta}\rfloor+1}^{L-1} xe^{-\theta x}dx\\
&\leq&\dfrac{2}{\theta}e^{-1}+\int_{1}^{L-1} xe^{-\theta x}dx.
\end{eqnarray*}

{\it  Case 3:  $0<\theta\leq\dfrac{1}{L-1}$.}  In this case,
 $f^\prime (x)\geq 0$ for $0\leq x\leq L-1$. Hence
\begin{equation}\label{5.2.10}
ie^{-\theta i}\leq \int_i^{i+1} xe^{-\theta x}dx,\; i=1,\dots,L-2.
\end{equation}
Therefore,
\begin{eqnarray*}
S_1&=&\sum_{i=1}^{L-2}ie^{-\theta i}+(L-1)e^{-(L-1)\theta}
\leq (L-1)e^{-(L-1)\theta}+\sum_{i=1}^{L-2}\int_i^{i+1} xe^{-\theta x}dx\\
&=&(L-1)e^{-(L-1)\theta}+\int_{1}^{L-1} xe^{-\theta x}dx
\end{eqnarray*}
Combining the three cases discussed above gives
\begin{equation}\label{5.2.12}
S_1\leq M_{\theta,L} +\int_{1}^{L-1} xe^{-\theta
x}dx=M_{\theta,L}+\dfrac{1}{\theta}\left(1+\dfrac{1}{\theta}\right)e^{-\theta}
-\dfrac{1}{\theta}\left(L-1+\dfrac{1}{\theta}\right)e^{-\theta(L-1)},
\end{equation}
where $M_{\theta,L}$ is defined in \dref{5.2.11}.

An analogous but simpler  analysis of $S_2,\; S_3$, and $S_4$ gives
\begin{eqnarray}\label{5.2.13}
\nonumber
&&S_2\leq 1+\dfrac{1}{\theta}-\dfrac{1}{\theta}e^{-\theta(L-1)},\\
&&S_3\leq\dfrac{1}{\theta}e^{-\theta
(N-2)}\left[\left(L+1-\dfrac{1}{\theta}\right)e^{\theta L}
+\dfrac{1}{\theta}-1\right]\triangleq M_{\theta,N},\\
\nonumber &&S_4\leq \dfrac{L}{\theta}\left[e^{-\theta
(L-1)}-e^{-\theta (N-L-2)}\right].
\end{eqnarray}

We next show when $N>9$,
\begin{equation}\label{5.3.14}
M_{\theta,N}<\dfrac{1}{\theta^2}e^{-\theta(L-1)}+\dfrac{L}{\theta}e^{-\theta(N-L-2)}.
\end{equation}
Since $L=N/3$ or $L=\lfloor N/3\rfloor+1$ if $N/3$ is not an
integer, it follows that $L\geq 4$ when $N>9$, hence
\begin{equation}\label{5.3.15}
\begin{array}{ll}
M_{\theta,N}-\dfrac{1}{\theta^2}e^{-\theta(L-1)}-\dfrac{L}{\theta}e^{-\theta(N-L-2)}
&=\disp
\dfrac{1}{\theta^2}e^{-\theta(N-2)}\left[(\theta-1)\left(e^{\theta
L}-1\right)-e^{\theta (N-L-1)}\right]\crr &\leq \disp
\dfrac{1}{\theta^2}e^{-\theta(N-2)}\left[(\theta-1)\left(e^{\theta
L}-1\right)-e^{\theta (2L-3)}\right]\crr &<\disp
\dfrac{1}{\theta^2}e^{-\theta(N-2)}\left(e^{\theta
L}-1\right)\left[\theta-e^{(L-3)\theta}\right]\crr &< 0.
\end{array}
\end{equation}
As a consequence,
\begin{eqnarray*}
S_1+S_2+S_3+S_4&\leq&M_{\theta,L}+\dfrac{1}{\theta}\left(1+\dfrac{1}{\theta}\right)e^{-\theta}
-\dfrac{1}{\theta}\left(L-1+\dfrac{1}{\theta}\right)e^{-\theta(L-1)}+1+\dfrac{1}{\theta}\\
&&-\dfrac{1}{\theta}e^{-\theta(L-1)}
+M_{\theta,N}+\dfrac{L}{\theta}\left[e^{-\theta (L-1)}-e^{-\theta
(N-L-2)}\right]\\
&=&M_{\theta,L}
+\dfrac{1}{\theta}\left(1+\dfrac{1}{\theta}\right)e^{-\theta}-\dfrac{1}{\theta^2}e^{-\theta
(L-1)}+1+\dfrac{1}{\theta}+M_{\theta,N}-\dfrac{L}{\theta}e^{-\theta
(N-L-2)}\\
&<&M_{\theta,L}+\left(1+\dfrac{1}{\theta}\right)^2.
\end{eqnarray*}
Hence
\begin{eqnarray*}
\|Y_0-X_0\|_F&<&\left(\sqrt{2}+\dfrac{1}{4M\pi^2\alpha_0
T_1}\right)M_0e^{-\alpha_0
M^2\pi^2 T_1}\sqrt{S_1+S_2+S_3+S_4}\\
&<&\left(\sqrt{2}+\dfrac{1}{4M\pi^2\alpha_0
T_1}\right)M_0e^{-\alpha_0 M^2\pi^2
T_1}\sqrt{M_{\theta,L}+\left(1+\dfrac{1}{\theta}\right)^2}.
\end{eqnarray*}
By almost the same analysis to $\|Y_1-X_1\|_F$, we  can achieve the
estimation \dref{5.1.6}. The details are  omitted. This completes
the proof of the theorem.
\end{proof}

The next lemmas show the effect of perturbations in a matrix to its
generalized inverse or eigenvalues.
\begin{lemma}\label{Lem5.1}(\cite{Stewart})
For any two matrices $A$ and $B$ with $B=A+E$, if
$\rank(A)=\rank(B)$, then
\begin{equation}\label{5.1.11}
\left\|B^\dag-A^\dag\right\|_2\leq \dfrac{1+\sqrt{5}}{2}
\left\|A^\dag\right\|_2\cdot\left\|B^\dag\right\|_2\cdot
\left\|E\right\|_2,
\end{equation}
where $\|\cdot\|_2$ denotes  the matrix spectral norm (matrix
2-norm).
\end{lemma}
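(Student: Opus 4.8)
The statement to be proved is a standard matrix-perturbation result: if $B=A+E$ and $\rank(A)=\rank(B)$, then $\|B^\dag-A^\dag\|_2\leq \frac{1+\sqrt5}{2}\,\|A^\dag\|_2\,\|B^\dag\|_2\,\|E\|_2$.  Since the lemma is explicitly attributed to Stewart, the honest plan is to reduce it to the classical identity for the difference of two pseudoinverses and then to control each resulting term under the equal-rank hypothesis.  The plan is to start from the well-known decomposition
\begin{equation*}
B^\dag-A^\dag = -B^\dag E A^\dag + B^\dag(I-AA^\dag)
 - (I-B^\dag B)E A^\dag A A^\dag,
\end{equation*}
or more conveniently the three-term form
\begin{equation*}
B^\dag-A^\dag = -B^\dag E A^\dag
  + B^\dag(BB^\dag)^\perp
  - (B^\dag B)^\perp A^\dag,
\end{equation*}
where $(\cdot)^\perp$ abbreviates the complementary orthogonal projector $I-(\cdot)$.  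This identity holds for arbitrary $A,B$ and is verified by a direct algebraic manipulation using the four Penrose conditions.

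The key simplification is the equal-rank hypothesis $\rank(A)=\rank(B)=r$.  First I would invoke the standard consequence that when $\rank(B)=\rank(A+E)=\rank(A)$, the two projector terms can be merged and bounded in terms of $E$ alone: one shows $\|B^\dag(I-AA^\dag)\|_2\le \|B^\dag\|_2\|(I-AA^\dag)E A^\dag\|_2$-type estimates, using that $(I-AA^\dag)B = (I-AA^\dag)E$ because $(I-AA^\dag)A=0$, and symmetrically $B(I-A^\dag A)=E(I-A^\dag A)$.  Each projector term therefore carries a hidden factor of $E$, so that all three terms on the right are $O(\|E\|_2)$ with coefficients built from $\|A^\dag\|_2$ and $\|B^\dag\|_2$.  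Taking spectral norms, using submultiplicativity, and that orthogonal projectors have norm at most $1$, I would bound
\begin{equation*}
\|B^\dag-A^\dag\|_2 \le \|B^\dag\|_2\|A^\dag\|_2\|E\|_2
 + \|B^\dag\|_2^2\|E\|_2 + \|A^\dag\|_2^2\|E\|_2,
\end{equation*}
or, after regrouping the projector contributions more sharply, a bound of the shape $c\,\|A^\dag\|_2\|B^\dag\|_2\|E\|_2$.

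The main obstacle is pinning down the precise constant $\frac{1+\sqrt5}{2}$ (the golden ratio) rather than a cruder constant like $3$.  The two crude projector terms $\|B^\dag\|_2^2\|E\|_2$ and $\|A^\dag\|_2^2\|E\|_2$ must be converted into the mixed product $\|A^\dag\|_2\|B^\dag\|_2$, and the golden ratio emerges from optimizing the resulting quadratic relation: writing $\beta=\|B^\dag\|_2/\|A^\dag\|_2$, the combined coefficient is of the form $1+\beta+\beta^{-1}$ constrained by the equal-rank inequality $\|B^\dag\|_2\le \frac{1+\sqrt5}{2}\|A^\dag\|_2$ (valid whenever $\|A^\dag\|_2\|E\|_2$ is suitably bounded), and substituting this back yields exactly $\frac{1+\sqrt5}{2}$.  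Since this is Stewart's published result, the cleanest route in the paper is simply to cite \cite{Stewart} for the sharp constant and, if a self-contained argument is wanted, to record the identity above and the projector estimates, leaving the constant optimization to the reference.  I expect the routine algebra of verifying the pseudoinverse identity to be mechanical; the delicate part is the sharp golden-ratio constant, which is exactly why the lemma is attributed rather than reproved.
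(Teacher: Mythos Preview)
The paper does not prove this lemma at all: it is stated as a quoted result from Stewart's 1977 survey and used as a black box in the proof of Theorem~4.2. So your concluding instinct---``the cleanest route in the paper is simply to cite \cite{Stewart}''---is precisely what the paper does, and there is no in-paper argument to compare your sketch against.

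As for the sketch itself: the overall strategy (Wedin-type three-term decomposition of $B^\dag-A^\dag$, then exploit $\rank A=\rank B$ to show each projector term hides a factor of $E$) is the right one and does lead to a bound of the form $c\,\|A^\dag\|_2\|B^\dag\|_2\|E\|_2$. However, your account of how the sharp constant $\tfrac{1+\sqrt5}{2}$ arises is not correct. There is no general inequality $\|B^\dag\|_2\le \tfrac{1+\sqrt5}{2}\|A^\dag\|_2$ under the equal-rank hypothesis alone, and the golden ratio does not come from optimizing $1+\beta+\beta^{-1}$. In Stewart's actual argument the constant emerges from a careful estimate of the projector cross-terms via the CS decomposition (angles between the range spaces of $A$ and $B$), and the crude bound your three displayed terms give is $3$, not $\tfrac{1+\sqrt5}{2}$. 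So if you were to include a self-contained proof you would need substantially more than what you have outlined; for the purposes of this paper, the citation is both sufficient and exactly what the authors do.
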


\begin{lemma}\label{Lem5.2}(\cite{Wedin})
If
\begin{equation}\label{5.1.12}
\rank(A+E)=\rank(A) \;\; and \;\;
\left\|E\right\|_2<\dfrac{1}{\left\|A^\dag\right\|_2},
\end{equation}
then
\begin{equation}\label{5.1.13}
\left\|(A+E)^\dag\right\|_2 \leq
\dfrac{\left\|A^\dag\right\|_2}{1-\left\|A^\dag\right\|_2\cdot\|E\|_2}.
\end{equation}
\end{lemma}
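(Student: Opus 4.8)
The plan is to translate the spectral-norm bound into a statement about the smallest nonzero singular value of $A+E$, and then control that singular value by the classical singular-value perturbation inequality. Write $r=\rank(A)$ and let $\sigma_1(B)\geq \sigma_2(B)\geq\cdots$ denote the singular values of a matrix $B$ in nonincreasing order. The key elementary fact I would invoke is that, for any matrix $B$ of rank $r$, the SVD together with the definition of the Moore--Penrose inverse gives $\|B^\dag\|_2=1/\sigma_r(B)$; that is, the spectral norm of $B^\dag$ is the reciprocal of the smallest nonzero singular value of $B$. Consequently the desired estimate \dref{5.1.13} is equivalent to a lower bound on $\sigma_r(A+E)$.

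First I would use the rank hypothesis $\rank(A+E)=\rank(A)=r$ to ensure that $A+E$ also has exactly $r$ nonzero singular values, so that $\|(A+E)^\dag\|_2=1/\sigma_r(A+E)$ with $\sigma_r(A+E)$ being genuinely the smallest \emph{nonzero} singular value. Next I would apply the Weyl-type perturbation inequality for singular values, namely $|\sigma_i(A+E)-\sigma_i(A)|\leq\|E\|_2$ for every $i$. Taking $i=r$ and recalling $\sigma_r(A)=1/\|A^\dag\|_2$ yields
\begin{equation*}
\sigma_r(A+E)\geq \sigma_r(A)-\|E\|_2=\frac{1}{\|A^\dag\|_2}-\|E\|_2=\frac{1-\|A^\dag\|_2\cdot\|E\|_2}{\|A^\dag\|_2}.
\end{equation*}

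The hypothesis $\|E\|_2<1/\|A^\dag\|_2$ guarantees $1-\|A^\dag\|_2\cdot\|E\|_2>0$, so the right-hand side above is strictly positive and, in particular, consistent with $\sigma_r(A+E)>0$. Taking reciprocals then gives
\begin{equation*}
\|(A+E)^\dag\|_2=\frac{1}{\sigma_r(A+E)}\leq\frac{\|A^\dag\|_2}{1-\|A^\dag\|_2\cdot\|E\|_2},
\end{equation*}
which is precisely \dref{5.1.13}. The only point requiring care is the index bookkeeping: the rank-preservation hypothesis is essential to guarantee that $\sigma_r(A+E)$ is the smallest nonzero singular value (and not an index already past the nonzero part), so that the identity $\|(A+E)^\dag\|_2=1/\sigma_r(A+E)$ is legitimate. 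I expect this observation, combined with a clean invocation of the singular-value perturbation inequality, to be the heart of the argument, the remaining manipulations being purely algebraic.
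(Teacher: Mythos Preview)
Your argument is correct. The paper does not actually prove this lemma: it is stated as a known result and attributed to Wedin \cite{Wedin} without any accompanying proof. So there is nothing to compare your approach against on the paper's side.

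For completeness, your route via the Weyl-type inequality $|\sigma_i(A+E)-\sigma_i(A)|\leq\|E\|_2$ and the identity $\|B^\dag\|_2=1/\sigma_r(B)$ is exactly the standard short proof of this perturbation bound, and you have correctly identified that the rank-preservation hypothesis is what makes the index $r$ pick out the smallest \emph{nonzero} singular value of $A+E$ rather than a zero one. The proof is clean and self-contained; nothing further is needed.
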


\begin{lemma}\label{Lem5.3}(\cite{Bauer})
If $A$ is diagonalizable, i.e.,
$$A=X\Lambda X^{-1},\; where \; \Lambda=\diag (\lambda_1,\dots,\lambda_n),
$$
then for any $\widetilde{\lambda}\in \lambda(\widetilde{A})$, there
exists a $\lambda\in \lambda(A)$ such that
\begin{equation}\label{5.1.14}
|\widetilde{\lambda}-\lambda|\leq
\kappa(X)\cdot\left\|\widetilde{A}-A\right\|_2,
\end{equation}
where $\lambda(A)$ is the set of the eigenvalues of  $A$ and
$\kappa(X)$ is the (spectral) condition number of  $X$,   defined as
$$\kappa(X)=\|X\|_2\cdot\left\|X^{-1}\right\|_2.$$
\end{lemma}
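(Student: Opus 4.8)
The statement is the classical Bauer--Fike perturbation theorem, and the plan is to reduce the claim to a bound on $\min_{i}|\widetilde\lambda-\lambda_i|$, since whichever index $i$ attains this minimum supplies the required $\lambda\in\lambda(A)$. First I would dispose of the trivial case: if $\widetilde\lambda\in\lambda(A)$, then choosing $\lambda=\widetilde\lambda$ makes the left-hand side of \dref{5.1.14} vanish and there is nothing to prove. So from now on I assume $\widetilde\lambda\notin\lambda(A)$, which is precisely the condition guaranteeing that the diagonal matrix $\Lambda-\widetilde\lambda I$ is invertible.

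Next I would set $E=\widetilde A-A$ and pick a nonzero eigenvector $v$ of $\widetilde A$ associated with $\widetilde\lambda$, so that $(A+E)v=\widetilde\lambda v$, i.e.\ $(A-\widetilde\lambda I)v=-Ev$. Substituting the diagonalization $A=X\Lambda X^{-1}$ and factoring $X$ out on the left gives $X(\Lambda-\widetilde\lambda I)X^{-1}v=-Ev$. Introducing $w=X^{-1}v$, which is nonzero because $X$ is invertible, and using invertibility of $\Lambda-\widetilde\lambda I$, I would rearrange this into the identity
\[
w=-(\Lambda-\widetilde\lambda I)^{-1}X^{-1}EXw.
\]

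Taking the spectral norm of both sides, applying submultiplicativity, and cancelling the factor $\|w\|_2>0$ then yields
\[
1\le\bigl\|(\Lambda-\widetilde\lambda I)^{-1}\bigr\|_2\cdot\|X^{-1}\|_2\cdot\|X\|_2\cdot\|E\|_2
=\bigl\|(\Lambda-\widetilde\lambda I)^{-1}\bigr\|_2\,\kappa(X)\,\|E\|_2.
\]
The one genuinely structural point is the identity $\bigl\|(\Lambda-\widetilde\lambda I)^{-1}\bigr\|_2=\bigl(\min_i|\lambda_i-\widetilde\lambda|\bigr)^{-1}$: since $\Lambda-\widetilde\lambda I$ is diagonal, its inverse is diagonal with entries $(\lambda_i-\widetilde\lambda)^{-1}$, and the spectral norm of a diagonal matrix is the largest modulus of its diagonal entries. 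Substituting this and rearranging delivers $\min_i|\lambda_i-\widetilde\lambda|\le\kappa(X)\|E\|_2$, which is exactly \dref{5.1.14} for the minimizing $\lambda$.

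The argument is short and presents no real obstacle; the only points requiring care are the case split at $\widetilde\lambda\in\lambda(A)$, which is needed so that $\Lambda-\widetilde\lambda I$ may be inverted, and the diagonal-norm identity above, both of which are elementary.
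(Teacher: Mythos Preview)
Your argument is correct and is precisely the standard proof of the Bauer--Fike theorem. Note, however, that the paper does not supply its own proof of this lemma: it is stated as a cited result from \cite{Bauer} and used as a black box in the proof of Theorem~\ref{Th5.2.2}. So there is nothing to compare against beyond observing that what you wrote is the classical derivation.
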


Suppose the singular values of $Y_0$ are $\sigma(Y_0)=\{\sigma_i\}$
and $Y_{0,M}$ is the rank-$M$ truncated approximation of $Y_0$
defined by
\begin{equation}\label{5.1.16}
Y_{0,M}=U_{0,M}AV_{0,M}^\top,
\end{equation}
where $U_{0,M},\; A$, and $V_{0,M}$ are defined in \dref{4.2.10.2}.
Now we are in a position to give an error analysis for the infinite
spectral estimation problem \dref{5.1.1} using the matrix pencil
method.
\begin{theorem}\label{Th5.2.2}
Let $\sigma_1\geq\sigma_2\geq\dots\geq\sigma_M$ be the first $M$
singular values of the matrix $Y_0$, and assume that $Y_{0,M}^\dag
Y_1$ is diagonalizable, i.e. $Y_{0,M}^\dag
Y_1=X_M\widetilde{\Lambda}_M X_M^{-1},$ where
$$\widetilde{\Lambda}_M=\diag
(\widetilde{z}_1,\dots,\widetilde{z}_M,0,\dots,0),\;
\widetilde{z}_1\geq \widetilde{z}_2\geq \dots\geq \widetilde{z}_M.
$$ The nonzero eigenvalues of the matrix $X_0^\dag X_1$ are supposed
to be
$$\Lambda_M=\{z_1,z_2,\dots,z_M\},\; z_1\geq z_2\geq \dots\geq z_M.$$
Let $\theta$ and $M_{\theta,L}$ be defined as in \dref{5.2.1},
\dref{5.2.11}, respectively, and let
\begin{equation}\label{5.1.17}
\rho=\left(\left\|Y_{0,M}-Y_0\right\|_2+\left(\sqrt{2}+\dfrac{1}{4M\pi^2\alpha_0
T_1}\right)M_0e^{-\alpha_0 M^2\pi^2
T_1}\sqrt{M_{\theta,L}+\left(1+\dfrac{1}{\theta}\right)^2}\right)\bigg/\sigma_M.
\end{equation}
If $\rho<1$, then
\begin{equation}\label{5.1.19}
\begin{array}{ll}
|\widetilde{z}_n-z_n|<&\disp\dfrac{\kappa (X_M)}{\sigma_M\cdot
(1-\rho)}\cdot \Bigg[\dfrac{1+\sqrt{5}}{2}\rho\|Y_1\|_2
+\disp\left(\sqrt{2}+\dfrac{1}{4M\pi^2\alpha_0
T_1}\right)M_0e^{-\alpha_0 M^2\pi^2 T_1} \crr & \times
\sqrt{M_{\theta,L+1}+\dfrac{1}{\theta}
\left(1+\dfrac{1}{\theta}\right)e^{-\theta}}\Bigg].
\end{array}
\end{equation}
In particular, if $\theta>\dfrac{1}{L-1}$, then
\begin{equation}\label{5.1.10}
|\widetilde{z}_n-z_n|<\dfrac{\kappa (X_M)\cdot \rho}{\sigma_M\cdot
(1-\rho)}\cdot \left[\dfrac{1+\sqrt{5}}{2}\|Y_1\|_2+\sigma_M\right].
\end{equation}
\end{theorem}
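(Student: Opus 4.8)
The plan is to estimate $|\widetilde{z}_n - z_n|$ by viewing $\widetilde{z}_n$ and $z_n$ as the nonzero eigenvalues of $Y_{0,M}^\dag Y_1$ and $X_0^\dag X_1$, respectively, and applying Lemma~\ref{Lem5.3} (the Bauer--Fike type bound). Since $Y_{0,M}^\dag Y_1 = X_M \widetilde{\Lambda}_M X_M^{-1}$ is assumed diagonalizable, Lemma~\ref{Lem5.3} gives, for each $\widetilde{z}_n$, a nearby eigenvalue of $X_0^\dag X_1$ with
$$
|\widetilde{z}_n - z_n| \leq \kappa(X_M)\cdot \left\| Y_{0,M}^\dag Y_1 - X_0^\dag X_1 \right\|_2 .
$$
Thus the crux is to bound the matrix difference $\| Y_{0,M}^\dag Y_1 - X_0^\dag X_1 \|_2$. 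I would first split this via the triangle inequality into a ``perturbation of the pseudoinverse'' term and a ``perturbation of $Y_1$'' term:
$$
\left\| Y_{0,M}^\dag Y_1 - X_0^\dag X_1 \right\|_2 \leq \left\| Y_{0,M}^\dag - X_0^\dag \right\|_2 \|Y_1\|_2 + \left\| X_0^\dag \right\|_2 \left\| Y_1 - X_1 \right\|_2 .
$$

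For the first term I would apply Lemma~\ref{Lem5.1} to the pair $X_0$, $Y_{0,M}$ (writing $Y_{0,M} = X_0 + E$ with $E = Y_{0,M} - X_0$), which yields
$$
\left\| Y_{0,M}^\dag - X_0^\dag \right\|_2 \leq \dfrac{1+\sqrt{5}}{2} \left\| X_0^\dag \right\|_2 \cdot \left\| Y_{0,M}^\dag \right\|_2 \cdot \left\| Y_{0,M} - X_0 \right\|_2 ,
$$
and then control $\| Y_{0,M}^\dag \|_2$ through Lemma~\ref{Lem5.2}. The key numerical quantities are $\| X_0^\dag \|_2 = 1/\sigma_M$ (the smallest nonzero singular value governs the pseudoinverse norm) and the combined perturbation bound
$$
\left\| Y_{0,M} - X_0 \right\|_2 \leq \left\| Y_{0,M} - Y_0 \right\|_2 + \left\| Y_0 - X_0 \right\|_2 ,
$$
where $\|Y_0 - X_0\|_2 \leq \|Y_0 - X_0\|_F$ is bounded by Theorem~\ref{Th5.1.1}. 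Dividing this by $\sigma_M$ gives exactly the quantity $\rho$ defined in \dref{5.1.17}, and the hypothesis $\rho < 1$ is precisely the condition $\| E \|_2 < 1/\| X_0^\dag \|_2$ needed so that Lemma~\ref{Lem5.2} applies and $\| Y_{0,M}^\dag \|_2 \leq \| X_0^\dag \|_2/(1-\rho) = 1/(\sigma_M(1-\rho))$. Substituting these back assembles the $\frac{1+\sqrt5}{2}\rho\|Y_1\|_2$ term in \dref{5.1.19}. For the second term, $\| X_0^\dag \|_2 \| Y_1 - X_1 \|_2 \leq \|Y_1 - X_1\|_F/\sigma_M$, and $\|Y_1 - X_1\|_F$ is bounded by the estimate \dref{5.1.6} of Theorem~\ref{Th5.1.1}, producing the term with $\sqrt{M_{\theta,L+1}+\frac{1}{\theta}(1+\frac{1}{\theta})e^{-\theta}}$. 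Collecting everything under the common factor $\kappa(X_M)/(\sigma_M(1-\rho))$ yields \dref{5.1.19}.

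The specialization \dref{5.1.10} under the hypothesis $\theta > \frac{1}{L-1}$ should follow by bounding the radical factors against $\rho$ itself: when $\theta > \frac{1}{L-1}$, the piecewise quantity $M_{\theta,L}$ takes its first two (nonpolynomial-in-$L$) branches, and one checks that $\sqrt{M_{\theta,L+1}+\frac{1}{\theta}(1+\frac{1}{\theta})e^{-\theta}}$ is dominated by $\sqrt{M_{\theta,L}+(1+\frac{1}{\theta})^2}$, so that the second bracketed term in \dref{5.1.19} is at most $\rho\,\sigma_M$ up to the common prefactor; absorbing it produces the compact bracket $[\frac{1+\sqrt5}{2}\|Y_1\|_2 + \sigma_M]$ multiplied by $\rho$.

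The main obstacle is the careful bookkeeping that makes the radical-comparison step in the last paragraph rigorous: one must verify that the $L{+}1$ versions of the $M_{\theta,\cdot}$ and residual terms are genuinely bounded by the $L$ versions used to define $\rho$, uniformly in the regime $\theta > \frac{1}{L-1}$, so that the second perturbation contribution really collapses into a multiple of $\rho\,\sigma_M$. The pseudoinverse estimates themselves are routine once Lemmas~\ref{Lem5.1} and~\ref{Lem5.2} are invoked, but the passage from the general bound \dref{5.1.19} to the clean form \dref{5.1.10} hinges on this monotonicity comparison of the perturbation bounds.
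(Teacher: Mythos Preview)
Your overall strategy matches the paper's proof: split $\|Y_{0,M}^\dag Y_1 - X_0^\dag X_1\|_2$ by the triangle inequality, control the pseudoinverse perturbation with Lemma~\ref{Lem5.1}, bound the remaining pseudoinverse norm with Lemma~\ref{Lem5.2}, feed in the Frobenius estimates of Theorem~\ref{Th5.1.1}, and finish with Lemma~\ref{Lem5.3}. The specialization to $\theta>1/(L-1)$ via $M_{\theta,L+1}=M_{\theta,L}$ and $\frac{1}{\theta}(1+\frac{1}{\theta})e^{-\theta}<(1+\frac{1}{\theta})^2$ is also the paper's argument.

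There is, however, a genuine slip: you have interchanged the roles of $X_0$ and $Y_{0,M}$ in the pseudoinverse estimates. You assert $\|X_0^\dag\|_2=1/\sigma_M$, but by the theorem's hypothesis $\sigma_M$ is the $M$-th singular value of $Y_0$, hence the smallest nonzero singular value of the rank-$M$ truncation $Y_{0,M}$. Thus it is $\|Y_{0,M}^\dag\|_2=1/\sigma_M$ that is known exactly, while $\|X_0^\dag\|_2$ is unknown (the matrix $X_0$ is built from the unobserved noiseless data). Lemma~\ref{Lem5.2} must therefore be applied with $A=Y_{0,M}$ and $A+E=X_0$, yielding
\[
\|X_0^\dag\|_2\;<\;\dfrac{\|Y_{0,M}^\dag\|_2}{1-\|Y_{0,M}^\dag\|_2\cdot\|Y_{0,M}-X_0\|_2}\;<\;\dfrac{1}{(1-\rho)\sigma_M},
\]
not the other way around. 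Once this correction is made, both terms in your split carry the factor $\|X_0^\dag\|_2<1/((1-\rho)\sigma_M)$, which is exactly why the common prefactor $\kappa(X_M)/(\sigma_M(1-\rho))$ sits in front of the entire bracket in \dref{5.1.19}. With your mis-identification the second term would only pick up $1/\sigma_M$, producing an ostensibly sharper bound that is unjustified.
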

\begin{proof} We need to estimate the matrix norm $\left\|Y_{0,M}^\dag
Y_1-X_0^\dag X_1\right\|_2$ first, which is done as follows:
\begin{eqnarray*}
\left\|Y_{0,M}^\dag Y_1-X_0^\dag X_1\right\|_2&=&\left\|Y_{0,M}^\dag Y_1-X_0^\dag Y_1+X_0^\dag Y_1-X_0^\dag X_1\right\|_2\\
&\leq&\left\|Y_{0,M}^\dag Y_1-X_0^\dag Y_1\right\|_2+\left\|X_0^\dag Y_1-X_0^\dag X_1\right\|_2\\
&\leq&\left\|Y_{0,M}^\dag-X_0^\dag\right\|_2\cdot\|Y_1\|_2+\left\|X_0^\dag\right\|_2\cdot\|Y_1-X_1\|_2.
\end{eqnarray*}
Since  $Y_{0,M}$ is the rank-$M$ truncated matrix of $Y_0$,
$\rank(X_0)=M=\rank(Y_{0,M})$. An application of Lemma \ref{Lem5.1}
yields
\begin{equation}\label{5.1.21}
\left\|Y_{0,M}^\dag-X_0^\dag\right\|_2\leq\dfrac{1+\sqrt{5}}{2}
\left\|Y_{0,M}^\dag\right\|_2\cdot\left\|X_0^\dag\right\|_2\cdot
\left\|Y_{0,M}-X_0\right\|_2.
\end{equation}
So
\begin{equation}\label{5.1.22}
\left\|Y_{0,M}^\dag Y_1-X_0^\dag X_1\right\|_2\leq
\dfrac{1+\sqrt{5}}{2}\left\|Y_{0,M}^\dag\right\|_2\cdot\left\|X_0^\dag\right\|_2\cdot
\|Y_{0,M}-X_0||_2\cdot \|Y_1\|_2+\left\|X_0^\dag\right\|_2\cdot
\|Y_1-X_1\|_2.
\end{equation}
Since  $\left\|Y_{0,M}^\dag\right\|_2=\dfrac{1}{\sigma_M}$,
$$
\begin{array}{ll}
\disp  \left\|Y_{0,M}-X_0\right\|_2\cdot
\left\|Y_{0,M}^\dag\right\|_2&\leq \disp
\left(\left\|Y_{0,M}-Y_0\right\|_2+\left\|Y_0-X_0\right\|_2\right)\cdot
\left\|Y_{0,M}^\dag\right\|_2\crr &\leq \disp
\left(\left\|Y_{0,M}-Y_0\right\|_2+\left\|Y_0-X_0\right\|_F\right)\cdot
\left\|Y_{0,M}^\dag\right\|_2\crr  &<\rho,
\end{array}
$$
where the last inequality is based on the estimation of
$\|Y_0-X_0\|_F$ in Theorem \ref{Th5.1.1}. Since $\rho<1$, it follows
from Lemma \ref{Lem5.2} that
\begin{equation}\label{5.1.23}
\left\|X_0^\dag\right\|_2\leq
\dfrac{\left\|Y_{0,M}^\dag\right\|_2}{1-\left\|Y_{0,M}^\dag\right\|_2\cdot
\left\|Y_{0,M}-X_0\right\|_2}<\dfrac{1}{(1-\rho)\sigma_M}.
\end{equation}
As a result,
\begin{eqnarray*}
\left\|Y_{0,M}^\dag Y_1-X_0^\dag X_1\right\|_2&\leq&
\left\|X_0^\dag\right\|_2\cdot\left[\dfrac{1+\sqrt{5}}{2}\left\|Y_{0,M}^\dag\right\|_2\cdot
\|Y_{0,M}-X_0||_2\cdot \|Y_1\|_2+\|Y_1-X_1\|_2\right]\\
&<&\dfrac{1}{(1-\rho)\cdot\sigma_M}\left[\dfrac{1+\sqrt{5}}{2}\rho\|Y_1\|_2+\|Y_1-X_1||_F\right].
\end{eqnarray*}
By Lemma \ref{Lem5.3}, we have
\begin{eqnarray*}
|\widetilde{z}_n-z_n|&\leq&\kappa (X_M)\cdot\left\|Y_{0,M}^\dag
Y_1-X_0^\dag X_1\right\|_2 <\dfrac{\kappa
(X_M)}{(1-\rho)\cdot\sigma_M}\left[\dfrac{1+\sqrt{5}}{2}\rho\|Y_1\|_2+\|Y_1-X_1||_F\right]\\
&<&\dfrac{\kappa (X_M)}{\sigma_M\cdot (1-\rho)}\cdot
\Bigg[\dfrac{1+\sqrt{5}}{2}\rho\|Y_1\|_2
+\left(\sqrt{2}+\dfrac{1}{4M\pi^2\alpha_0 T_1}\right)M_0e^{-\alpha_0
M^2\pi^2 T_1}\crr &&\times\sqrt{M_{\theta,L+1}
+\dfrac{1}{\theta}\left(1+\dfrac{1}{\theta}\right)e^{-\theta}}\Bigg].
\end{eqnarray*}
In particular, if $\theta>\dfrac{1}{L-1}$, it follows from the
definition of $M_{\theta,L}$ in \dref{5.2.11} that
\begin{equation}\label{5.1.25}
M_{\theta,L+1}=M_{\theta,L},\;\;\theta>\dfrac{1}{L-1}.
\end{equation}
On the other hand, since
$$\dfrac{1}{\theta}\left(1+\dfrac{1}{\theta}\right)e^{-\theta}<\left(1+\dfrac{1}{\theta}\right)^2,\;\;\theta>0,
$$
it follows from  \dref{5.1.25} that
$$\sqrt{M_{\theta,L+1}
+\dfrac{1}{\theta}\left(1+\dfrac{1}{\theta}\right)e^{-\theta}}<\sqrt{M_{\theta,L}
+\left(1+\dfrac{1}{\theta}\right)^2},$$ and then
\begin{eqnarray*}
|\widetilde{z}_n-z_n|&<&\dfrac{\kappa (X_M)}{\sigma_M\cdot
(1-\rho)}\cdot
\Bigg[\dfrac{1+\sqrt{5}}{2}\rho\|Y_1\|_2+\left(\sqrt{2}+\dfrac{1}{4M\pi^2\alpha_0
T_1}\right)M_0e^{-\alpha_0 M^2\pi^2 T_1}\\
&&\times\sqrt{M_{\theta,L}
+\left(1+\dfrac{1}{\theta}\right)^2}\Bigg]\\
&=&\dfrac{\kappa (X_M)}{\sigma_M\cdot (1-\rho)}\cdot
\bigg[\dfrac{1+\sqrt{5}}{2}\rho\|Y_1\|_2+\rho\cdot\sigma_M-\|Y_{0,M}-Y_0\|_2\bigg]\\
&<&\dfrac{\kappa (X_M)\cdot \rho}{\sigma_M\cdot (1-\rho)}\cdot
\left[\dfrac{1+\sqrt{5}}{2}\|Y_1\|_2+\sigma_M\right].
\end{eqnarray*}
This ends the proof of the theorem.
\end{proof}
\begin{remark}\label{Re5.1}
By $z_n=e^{-\lambda_n T_s}$, we can also obtain an error estimation
$|\widetilde{\lambda}_n-\lambda_n|$, between the estimated
eigenvalues and the exact eigenvalues, that is (for
$\theta>\frac{1}{L-1}$),
\begin{equation}\label{5.1.26}
|\widetilde{\lambda}_n-\lambda_n|=\dfrac{|\ln\widetilde{z}_n-\ln
z_n|}{T_s}=\dfrac{|\widetilde{z}_n-z_n|}{T_s\cdot\bar{z}_n}
<\dfrac{\kappa (X_M)\cdot \rho}{\sigma_M
(1-\rho)T_s\cdot\bar{z}_n}\cdot
\left[\dfrac{1+\sqrt{5}}{2}\|Y_1\|_2+\sigma_M\right],
\end{equation}
where the mean value theorem has been applied in the second equality
and $\bar{z}_n$ is between $\widetilde{z}_n$ and $z_n$. In addition,
we can choose $\bar{z}=\widetilde{z}_n$ in  case of
$|\widetilde{z}_n-z_n|\ll|\widetilde{z}_n|$.
\end{remark}
\begin{remark}\label{Re5.2} We point out that the estimation seems
hard to improve further.  It can be seen that  the estimation of
$\left\|Y_{0,M}^\dag Y_1-X_0^\dag X_1\right\|_2$ plays a key role in
the proof of Theorem \ref{Th5.2.2}. The condition $\rho<1$ is mainly
for the estimation of $\left\|X_0^\dag\right\|$, which becomes
extremely complicated for  $\rho\geq1$ due to  the unknown nature of
$X_0$. However, by  \dref{5.1.17}, since the value of $\rho$ is
determined by $M$ (determined by $\varepsilon$ in \dref{4.2.8}) and
$T_1$, the parameters $\varepsilon$ and $T_1$ can be chosen
appropriately in applications to make $\rho$ relatively small, and
from \dref{5.1.10}, the error bound becomes smaller as
$\rho/\sigma_M$ becomes smaller.
\end{remark}

\section{Numerical simulation}

In this section, we present some numerical examples to illustrate
the performance of the algorithm developed in section 3. It might be
worth noting that all the calculated numbers in this section are
rounded to four digits after the decimal point.

First, to generate data for the inverse  process, we take a real
diffusivity $\alpha^\ast$ and an initial value $u_0^\ast(x)$ to
solve the direct problem to obtain the values of observation data
$y(t)=u(0,t;f,u_0^\ast)$ over an interval $(0,T_3]$. In this
experiment, we take $\alpha=\alpha^\ast=4$ and
$$u_0(x)=u_0^\ast(x)=x-9\cos \pi x+5\cos 3\pi x,$$ in
system \dref{1.1}. Since
$$
\langle u_0^\ast,\phi_{2n}\rangle=0,\;n=1,2,\ldots,
$$
this initial value is not generic (\cite{Suzuki1}). The time
interval is chosen to be $[T_1,T_2,T_3]=[0.3,0.8,1.3]$, and the
control function $f(t)$ is chosen to be that defined in
\dref{3.14.3}. Then the observation data can be obtained from
\dref{2.2}-\dref{2.11}.

Now we assume that both the real value of the diffusion coefficient
$\alpha^\ast$ and initial value $u_0^\ast(x)$ of system \dref{1.1}
are unknown, and the only known information for $\alpha^\ast$ and
$u_0^\ast(x)$ is that
\begin{equation}\label{6.1.9}
\alpha^\ast\geq\alpha_0=3,\; \|u_0^\ast\|_{L^2(0,1)}\leq M_0=15.
\end{equation}
We will treat the measured value $y(t)$ as the inverse dynamical
data, and try to reconstruct the unknown $\alpha^\ast$ and
$u_0^\ast(x)$ by the proposed  algorithm.

{\it Step 1: Estimate $\{\widetilde{\lambda}_{n_k}\}_{k=0}^{M-1}$
from the measured value at every sampling time by the matrix pencil
method.}

Let $N_1=50$ and $0.3=t_0<t_1<\cdots <t_{50}=0.8$ be the equidistant
sample points with sampling period $T_s=0.01$. The pencil parameter
$L=17,$ and the number of exponential components $M=2$ which is
obtained from \dref{4.2.8}, where the threshold
$\varepsilon=10^{-10}$. The estimated
$\left\{\widetilde{z}_{n_k},\widetilde{\lambda}_{n_k}\right\}_{k=0}^1$
by virtue of the matrix pencil method are shown in Table
\ref{Tab1.1} and \ref{Tab1.2}, where
$\widetilde{z}_{n_k}=e^{-\widetilde{\lambda}_{n_k}T_s}.$

\begin{table}[H]
\caption{The estimated
$\left\{\widetilde{z}_{n_k},\;\widetilde{\lambda}_{n_k},\;\widetilde{C}_{n_k}\right\}_{k=0}^1$}
\centering \subtable[$\left\{\widetilde{z}_{n_k}\right\}_{k=0}^1$]{
\begin{tabular}{c c c}
\toprule
$k$ & 0 & 1 \\
\midrule
$\widetilde{z}_{n_k}$&1.0000&0.6738\\
\bottomrule
\end{tabular}
       \label{Tab1.1}
} \qquad
\subtable[$\left\{\widetilde{\lambda}_{n_k}\right\}_{k=0}^1$]{
\begin{tabular}{c c c}
\toprule
$k$ & 0 & 1 \\
\midrule
$\widetilde{\lambda}_{n_k}$&0.0000&39.4784\\
\bottomrule
\end{tabular}
\label{Tab1.2}} \qquad
\subtable[$\left\{\widetilde{C}_{n_k}\right\}_{k=0}^1$]{
\begin{tabular}{c c c}
\toprule
$k$ & 0 & 1 \\
\midrule
$\widetilde{C}_{n_k}$&0.5000&-9.4077\\
\bottomrule
\end{tabular}
\label{Tab1.3}}
\end{table}

{\it Step 2: Estimate $\left\{\widetilde{C}_{n_k}\right\}_{k=0}^1$
by solving the following linear least square problem:}
\begin{equation}\label{5.9}
\left\{\widetilde{C}_{n_k}\right\}_{k=0}^1=\argmin
\sum_{i=0}^{49}\left[y_i-\sum_{k=0}^1\widetilde{C}_{n_k}
e^{-\widetilde{\lambda}_{n_k}t_i}\right]^2.
\end{equation}
The estimated $\left\{\widetilde{C}_{n_k}\right\}_{k=0}^1$ are shown
in Table \ref{Tab1.3}.

It has been stated in Remark \ref{Re4.3.2} that
$$u(0,t;0,u_0^\ast)\approx \widetilde{y}(t)=0.5000-9.4077e^{-39.4784t},\; t>0.$$

{\it Step 3: Estimate the approximation of  $\alpha$.}

Similar to {\it Step 1}, let $N_2=50$ and let $0.8=t_0<t_1<\cdots
<t_{50}=1.3$ be the equidistant sample points with sampling period
$T_s^\prime=0.01$. Then  the pencil parameter $L^\prime=17,$ and the
number of exponential components $M^\prime=5$, where the threshold
$\varepsilon=10^{-10}$. The estimated
$\left\{C_n^\prime,\lambda_n^\prime\right\}_{n=0}^4$ are listed in
Table \ref{Tab2}.

It is shown in Remark \ref{Re4.3.3} that the pairs
$(C_n^\prime,\lambda_n^\prime)$ that satisfy \dref{4.30} are more
credible to estimate $\alpha$. It is obvious from Table \ref{Tab2}
that $\lambda_1^\prime$ and $\lambda_2^\prime$ are more suitable to
estimate $\alpha$, which can be recovered from \dref{4.28} that
$\alpha\approx\widetilde{\alpha}=4.0000$. In fact, $\alpha$ can also
be estimated by the $\alpha_0$ in Table \ref{Tab2} which is obtained
from $C_0^\prime$ in \dref{4.26} by
\begin{equation}\label{5.11}
\alpha\approx\alpha_0=-\dfrac{1}{3C_0^\prime}=4.0000.
\end{equation}

\begin{table}[H]
\caption{The estimated
$\left\{C_n^\prime,\lambda_n^\prime\right\}_{n=0}^4$ and the
estimated $\widetilde{\alpha}$}\label{Tab2}
\begin{center}
\begin{tabular}{c c c c c c}
\toprule
$n$&0&1&2&3&4 \\
\midrule
$100*C_n^\prime$&-8.3333&5.0661&1.2665&0.5664&1.4090\\
\midrule
$100*\lambda_n^\prime$&0.0000&39.4784&157.9137&355.5370&790.8813\\
\midrule
$100*C_n^\prime*\lambda_n^\prime$&/&2.0000&2.0000&2.0139&11.1438\\
\midrule
$\alpha\approx\dfrac{\lambda_n^\prime}{T_s^\prime n^2\pi^2}$&$\alpha_0$&4.0000&4.0000&4.0026&5.0083\\
\bottomrule
\end{tabular}
\end{center}
\end{table}

{\it Step 4: Estimate $\alpha$ from $\widetilde{\lambda}_{n_1}$ and
reconstruct $u_0(x)$.}

After obtaining the estimations $\alpha\approx 4.0000$ and
$\left\{\widetilde{\lambda}_{n_k}\right\}$ in Table \ref{Tab1.2}, we
can determine the series
$\K_M=\left\{n_k\right\}_{k=0}^{M-1}=\{0,1\}$ by \dref{4.31}.
Actually, the coefficient estimation  through \dref{4.1.37} is also
equal to $\widetilde{\alpha}=4.0000$.

Next we can estimate $u_0(x)$ by solving the matrix equation
\dref{4.35.2} with TSVD, where $T_0=0.01$ and $\widetilde{M}=20$.
The corresponding GCV analysis is shown in Figure \ref{Fig2.1}, from
which the regularization parameter is found to be $k=6$. Then the
solution of \dref{4.35.2} is given by \dref{4.35.6} and $u_0(x)$ can
be estimated by the Fourier series expansion:
\begin{equation}\label{5.11}
u_0(x)\approx\sum_{n=0}^{19} A_n(0) \cos n\pi x.
\end{equation}
The results are given in Figure \ref{Fig2.2}, from which we can see
that the estimated initial value is in agreement with the real one.

\begin{figure}[H]
 \centering
\subfigure[The GCV function]{
 \label{Fig2.1}
 \includegraphics[scale=0.35]{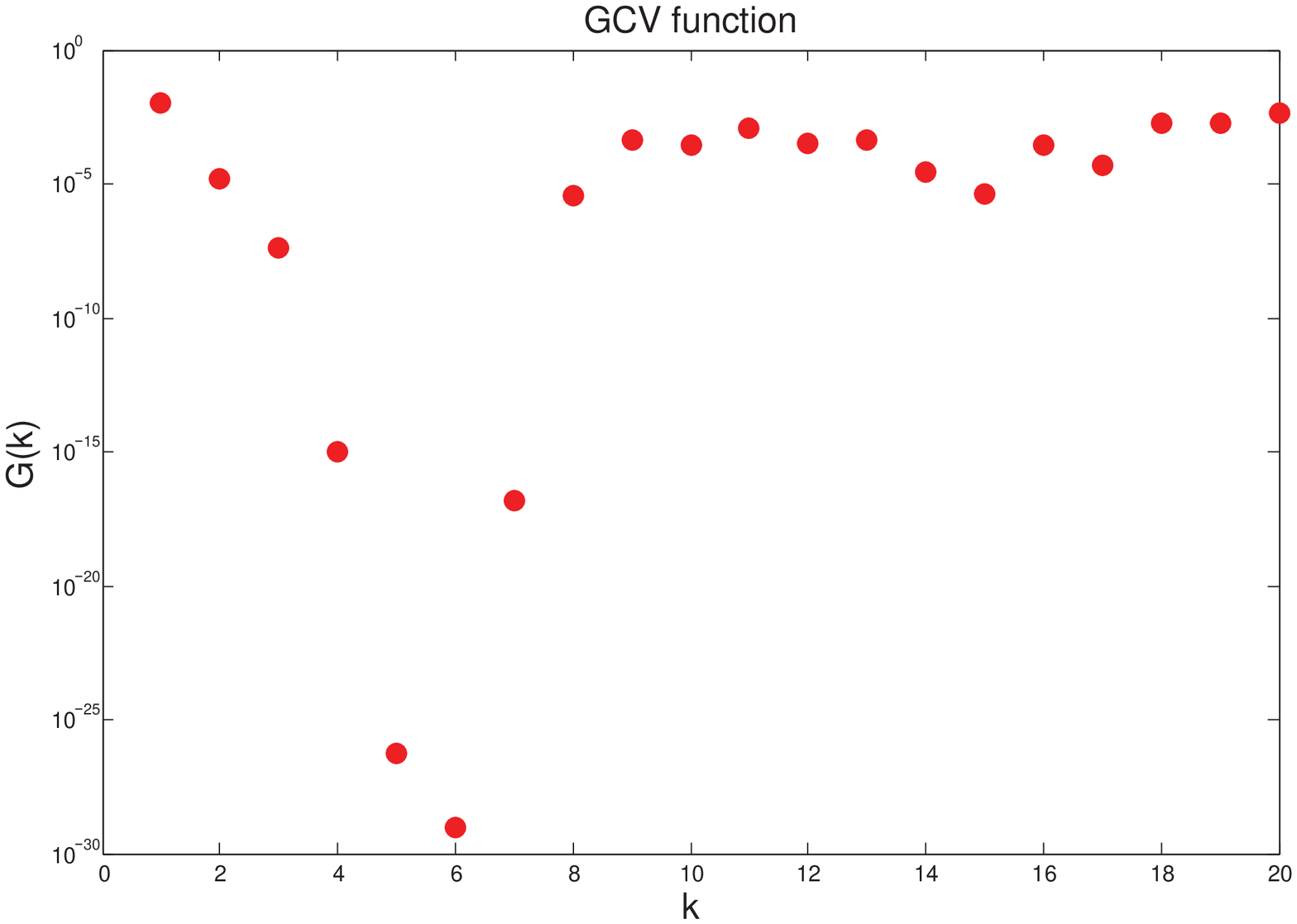}}\;\;
 \subfigure[The initial value]{
  \label{Fig2.2}
 \includegraphics[scale=0.35]{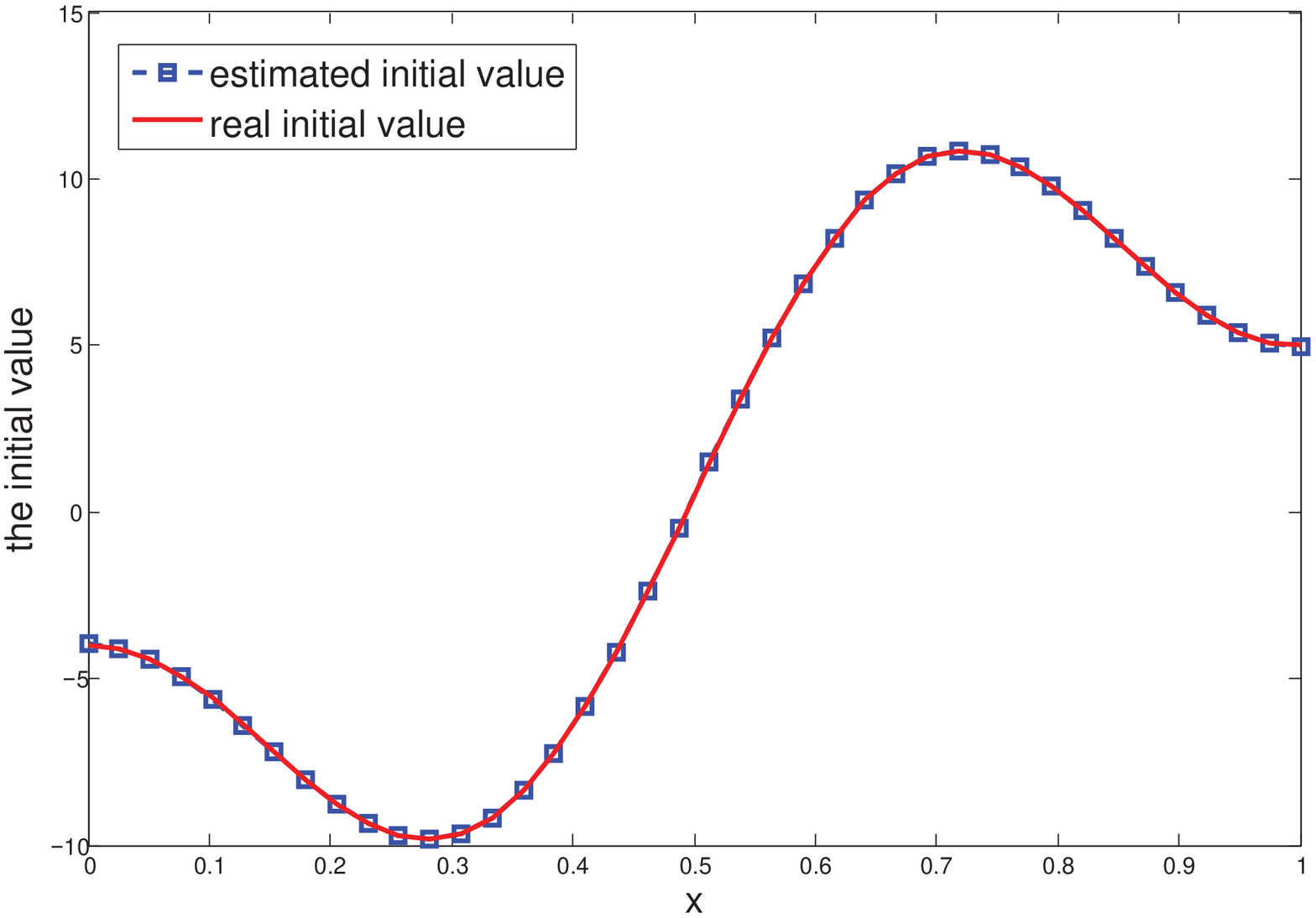}}
\caption{The GCV function and initial value} \label{Fig-3}
 \end{figure}

Finally, based on the error analysis in the previous section, we can
give the bound of the error between the real value $\alpha$ and the
reconstructed one $\widetilde{\alpha}=4.0000$. The parameters that
are relevant to the error analysis are given in Table \ref{Tab6}.

\begin{table}[H] \caption{The parameters for error analysis}\label{Tab6} \centering
\begin{tabular}{c c c c c c c c c c}\\
\toprule $M_0$ & $\alpha_0$ & $M$ & $N$ & $L$ & $T_1$ & $T_s$ &
$\theta$ & $M_{\theta,L}$ & $\|Y_1\|_2$\\
\midrule
15 & 3 & 2 & 50 & 17 & 0.3 & 0.01 & 2.3687 & 0.0936 & 11.8427\\
\bottomrule
\end{tabular}
\begin{tabular}{c c c c c}\\
\toprule $\sigma_M$ & $\|Y_{0,M}-Y_0\|_2$ & $\kappa(X_M)$ & $\rho$ \\
\midrule $9.5089\times 10^{-5}$ & $2.2494\times 10^{-15}$ & 17.9467 & $1.4522\times 10^{-10}$\\
\bottomrule
\end{tabular}
\end{table}

By Theorem \ref{Th5.2.2},
\begin{equation}\label{5.12}
|\widetilde{z}_n-z_n|<\dfrac{\kappa (X_M)\cdot \rho}{\sigma_M\cdot
(1-\rho)}\cdot
\left[\dfrac{1+\sqrt{5}}{2}\|Y_1\|_2+\sigma_M\right]\approx
5.2521\times 10^{-4}\triangleq e,\; n\in \K_M=\{0,1\}.
\end{equation}
It is clear from Table \ref{Tab1.1} that
$\widetilde{z}_1=\widetilde{z}_{n_1}=0.6738\gg e,$ and it follows
from \dref{5.1.26} in Remark \ref{Re5.1} that
\begin{equation}\label{5.13}
|\alpha-\widetilde{\alpha}|=\dfrac{|\lambda_1-\widetilde{\lambda}_1|}{\pi^2}<\dfrac{e}{\pi^2\cdot
T_s\cdot\widetilde{z}_1}\approx 7.8974\times 10^{-3}.
\end{equation}
We thus know that the real diffusion coefficient $\alpha^\ast$ is
between 3.9921 and 4.0079.

\section{Concluding remarks}

In this paper, we represent the boundary observation with boundary
Neumann control for a one-dimensional heat equation into a Dirichlet
series in terms of spectrum determined by the diffusivity and
coefficients determined by the initial value. The identification of
diffusion coefficient and initial value is therefore transformed
into an inverse problem of reconstruction of spectrum-coefficient
data from the observation. Taking the first finite terms of the
series, the problem happens to be an inverse problem of finite
exponential sequence with deterministic small perturbation. We are
thus able to develop an algorithm to reconstruct simultaneously the
diffusion coefficient and initial value by the matrix pencil method
which is used in signal processing. An error analysis is presented
and a numerical experiment is  carried out to validate the
efficiency and accuracy of the proposed algorithm. The method
developed is promising and can be applied in identification of
variable coefficients and other PDEs.

\section*{Acknowledgements}

This work was supported by the National Natural Science Foundation
of China and the National Research Foundation of South Africa.


\begin{thebibliography}{20}

\bibitem{Abramowitz}
M. Abramowitz and I.A. Stegun, {\it Handbook of Mathematical
Functions with Formulas, Graphs, and Mathematical Tables}, Dover
Publications,  New York, 1964.

\bibitem{Bauer}
F.L. Bauer and C.T. Fike, Norms and exclusion theorems, {\it  Numer.
Math.},  2(1960), 137--141.

\bibitem{Benabdallah1}
A. Benabdallah, P. Gaitan,  and J.L. Rousseau, Stability of
discontinuous diffusion coefficients and initial conditions in an
inverse problem for the heat equation, {\it SIAM J. Control Optim.},
46(2007), 1849-1881.

\bibitem{Chang}
J.D. Chang and B.Z. Guo, Identification of variable spacial
coefficients for a beam equation from boundary measurements, {\it
Automatica,} 43(2007), 732--737.

\bibitem{Choulli1}
M. Choulli and M. Yamamoto, Uniqueness and stability in determining
the heat radiative coefficient, the initial temperature and a
boundary coefficient in a parabolic equation, {\it Nonlinear Anal.},
69(2008), 3983-3998.

\bibitem{Deng1}
Z.C. Deng, L. Yang and J.N. Yu, Identifying the radiative
coefficient of heat conduction equations from discrete measurement
data, {\it Appl. Math. Lett.}, 22(2009), 495-500.

\bibitem{Golub}
G.H. Golub, M. Heath and G. Wahba, Generalized cross-validation as a
method for choosing a good ridge parameter, {\it Technometrics},
21(1979), 215-223.

\bibitem{Guo1}
B.Z. Guo and  J.D. Chang, Simultaneous identifiability of
coefficients, initial state and source for string and beam equations
via boundary control and observation, {\it Proc. 8th Asian Control
Conference,} Kaohsiung, 2011, 365--370.

\bibitem{Gutman}
S. Gutman and  J.H. Ha, Identifiability of piecewise constant
conductivity in a heat conduction process, {\it SIAM J. Control
Optim.}, 46(2007), 694--713.

\bibitem{Hansen}
P.C. Hansen, {\it Discrete Inverse Problems: Insight and
Algorithms}, SIAM, Philadelphia, 2010.

\bibitem{Hua1}
Y.B. Hua and  T.K. Sarkar, Further analysis of three modern
techniques for pole retrieval from data sequence, {\it Proc. 30th
Midwest Symp. Circuits Syst.,} Syracuse, NY, Aug. 1987, 793-797.

\bibitem{Hua2}
Y.B. Hua and T.K. Sarkar, Matrix pencil method and its performance,
{\it Proc. IEEE Int. Conf. Acoust., Speech, Signal Processing,} NY,
Apr. 1988, 2476-2479.

\bibitem{Hua3}
Y.B. Hua and T.K. Sarkar, Matrix pencil method for estimating
parameters of exponentially damped/undamped sinusoids in noise, {\it
IEEE Trans.  Acoust.  Speech Signal Process.,} 38(1990), 814--824.

\bibitem{Isakov1}
V. Isakov, {\it Inverse Problems for Partial Differential
Equations}, Springer, New York, 1998.

\bibitem{Kirsch1}
A. Kirsch, {\it An Introduction to the Mathematical Theory of
Inverse Problems}, Springer, New York, 1999.

\bibitem{Kitamura}
S. Kitamura and S. Nakagiri, Identifiability of spatially-varying
and constant parameters in distributed systems of parabolic type,
{\it SIAM J. Contorl Optim.}, 15(1977), 785--802.

\bibitem{Levitan}
B.M. Levitan, {\it Inverse Sturm-Liouville Problems}, VNU Science
Press, Utrecht, 1987.

\bibitem{Lorenzi1}
A. Lorenzi, Identification of the thermal conductivity in the
nonlinear heat equation, {\it Inverse Problems}, 3(1987), 437-451.

\bibitem{Ma}
Y.J. Ma, C.L. Fu,  and Y.X. Zhang, Identification of an unknown
source depending on both time and space variables by a variational
method, {\it Appl. Math. Model.}, 36(2012), 5080--5090.

\bibitem{Mirsky}
L. Mirsky, Symmetric gauge functions and unitarily invariant norms,
{\it  Quart. J. Math. Oxford Ser. (2)},  11(1960), 50--59.

\bibitem{Murayama}
R. Murayama, The Gel'fand-Levitan theory and certain inverse
problmes for the parabolic equation, {\it J. Fac. Sci. Univ. Tokyo
Sect. IA Math.}, 28(1981), 317--330.

\bibitem{Nakagiri}
S. Nakagiri, Identifiability of linear systems in Hilbert spaces,
{\it SIAM J. Contorl Optim.}, 21(1983), 501--530.

\bibitem{Orlov}
Y. Orlov and J. Bentsman, Adaptive distributed parameter systems
identification with enforceable identifiability  conditions and
reduced-order spatial differentiation, {\it IEEE Trans. Automat.
Control}, 45(2000), 203--216.

\bibitem{Pierce}
A. Pierce, Unique identification of eigenvalues and coefficients in
a parabolic equation, {\it SIAM J. Contorl Optim.}, 17(1979),
494--499.

\bibitem{Poschel}
J. P\"oschel and E. Trubowitz, {\it Inverse Spectral Theory},
Academic Press, Orlando, 1987.

\bibitem{Ramdani}
K. Ramdani, M. Tucsnak,  and G. Weiss, Recovering the initial state
of an infinite-dimensional system using observers, {\it Automatica,}
46(2010), 1616--1625.

\bibitem{Smyshlyaev}
A. Smyshlyaev, Y. Orlov and M. Krstic, Adaptive identification of
two unstable PDEs with boundary sensing and actuation, {\it Int. J.
Adapt. Control Signal Process.}, 23(2009), 131--149.

\bibitem{Stewart}
G.W. Stewart, On the perturbation of pseudo-inverses, projections
and linear least squares problems, {\it SIAM Rev.},  19(1977),
634--662.

\bibitem{Suzuki1}
T. Suzuki and R. Murayama, A uniqueness theorem in an identification
problem for coefficients of parabolic equations, {\it Proc. Japan
Acad. Ser. A Math. Sci.}, 56(1980), 259--363.

\bibitem{Suzuki2}
T. Suzuki, Uniqueness and nonuniqueness in an inverse problem for
the parabolic equation, {\it J. Differential Equations}, 47(1983),
296--316.

\bibitem{Titchmarsh}
E.C. Titchmarsh, {\it Introduction to the Theory of Fourier
Integrals}, 2nd Edtion, Clarendon Press, Oxford, 1948.


\bibitem{Wang1}
Y.B. Wang, J. Cheng, J. Nakagawa, and M. Yamamoto, A numerical
method for solving the inverse heat conduction problem without
initial value, {\it Inverse Probl. Sci. Eng.},  18(2010), 655--671.


\bibitem{Wedin}
P.A. Wedin, Perturbation theory for pseudo-inverses, {\it BIT,}
13(1973), 217--232.


\bibitem{Xu}
G.Q. Xu, State reconstruction of a distributed parameter system with
exact observability, {\it  J. Math. Anal. Appl.},  409(2014),
168--179.


\bibitem{Yamamoto1}
M. Yamamoto and J. Zou, Simultaneous reconstruction of the initial
temperature and heat radiative coefficient, {\it Inverse Problems,}
17(2001), 1181--1202.

\bibitem{Yamamoto2}
M. Yamamoto, Carleman estimates for parabolic equations and
applications, {\it Inverse Problems,} 25(2009), 123013 (75pp).

\bibitem{Zheng1}
G.H. Zheng and T. Wei, Recovering the source and initial value
simultaneously in a parabolic equation, {\it Inverse Problems,}
30(2014),  065013 (35pp).

\end{thebibliography}
\end{document}